%
\documentclass[12pt]{amsart}
%
\addtolength{\textwidth}{2cm}
\addtolength{\evensidemargin}{-1cm}
\addtolength{\oddsidemargin}{-1cm}
\usepackage{amsmath,amsthm,amsfonts,amssymb,verbatim,eucal}
\usepackage{url}
\usepackage[all]{xy}

\numberwithin{equation}{section}

\newtheorem{thm}[equation]{Theorem} 
\newtheorem{prop}[equation]{Proposition}
\newtheorem{lemma}[equation]{Lemma} 
\newtheorem{cor}[equation]{Corollary}
\newtheorem{example}[equation]{Example}
\newtheorem{remark}[equation]{Remark}

\DeclareMathOperator{\Alt}{Alt}
\DeclareMathOperator{\Ext}{Ext}
\DeclareMathOperator{\gr}{gr} 

\DeclareMathOperator{\im}{Im}
\DeclareMathOperator{\Ker}{Ker}
\DeclareMathOperator{\sgn}{sgn}
\DeclareMathOperator{\Sym}{Sym}

\newcommand{\NN}{\mathbb N}
\newcommand{\RR}{\mathbb R}
\newcommand{\DOT}{\setlength{\unitlength}{1pt}\begin{picture}(2.5,2)
               (1,1)\put(2,3.5){\circle*{3}}\end{picture}}

\newcommand{\Hom}{\mbox{\rm Hom\,}}

\renewcommand{\ker}{\mbox{\rm Ker\,}}

\newcommand{\ot}{\otimes}

\newcommand{\ld}{\lambda}                    
\newcommand{\cH}{\mathcal{H}}
\newcommand{\CC}{\mathbb{C}}  
\newcommand{\ZZ}{\mathbb{Z}}

\DeclareMathOperator{\codim}{codim}

\newcommand{\HH}{{\rm HH}}
\newcommand{\Wedge}{\textstyle\bigwedge}

\newcommand\bigquotient[2]{
        \mathchoice
            {
                \, \text{\raise.75ex\hbox{$#1$}\Big/\lower 2ex\hbox{$#2$}}\,%
            }
            {
                #1\,/\,#2
            }
            {
                #1\,/\,#2
            }
            {
                #1\,/\,#2
            }
    }

\begin{document}
\begin{abstract}
We investigate deformations of a skew group algebra that arise
from a finite group acting on a polynomial ring.
When the characteristic of the underlying field divides
the order of the group,
a new type of deformation emerges that does not occur 
in characteristic zero.
This analogue of Lusztig's graded
affine Hecke algebra for positive characteristic can
not be forged from the template of symplectic reflection 
and related algebras as originally crafted by Drinfeld.
By contrast, we show that in characteristic zero, for
arbitrary finite groups, a Lusztig-type deformation is always isomorphic to
a Drinfeld-type deformation. 
We fit all these deformations into a general theory, 
connecting 
Poincar\'e-Birkhoff-Witt deformations and Hochschild cohomology
when working over fields of arbitrary characteristic.
We make this connection by way of 
a double complex adapted from Guccione, Guccione, and Valqui, 
formed from 
the Koszul resolution of a polynomial ring and the bar resolution of a group algebra.
\end{abstract}
\title[PBW Deformations] 
{PBW Deformations of skew group algebras\\ in positive
characteristic}

\date{December 10, 2013.}
\author{A.\ V.\ Shepler}
\address{Department of Mathematics, University of North Texas,
Denton, Texas 76203, USA}
\email{ashepler@unt.edu}
\author{S.\  Witherspoon}
\address{Department of Mathematics\\Texas A\&M University\\
College Station, Texas 77843, USA}\email{sjw@math.tamu.edu}
\thanks{Key Words:
graded affine Hecke algebra, Drinfeld Hecke algebra,
deformations, modular representations,
Hochschild cohomology}
\thanks{MSC-Numbers: 20C08, 20C20, 16E40}
\thanks{This material is based upon work supported by the National Science 
Foundation under Grant No.~0932078000, while the second author was in 
residence at the Mathematical Sciences Research Institute (MSRI) in 
Berkeley, California, during the Spring semester of 2013.
The first author was partially supported by NSF grant
\#DMS-1101177.
The second author was partially supported by
NSF grant 
\#DMS-1101399.}

\maketitle

\section{Introduction}

Deformations of skew group algebras in positive characteristic
provide a rich theory that differs substantially from the
characteristic zero setting, as we argue in this article.
Such deformations over fields
of characteristic zero are classical.
Lusztig used a graded version of the affine Hecke algebra
over the complex numbers
to investigate the representation theory of groups of Lie type.
His graded affine Hecke algebra is a deformation of the 
natural semi-direct product algebra, i.e., skew group algebra, arising
from a Coxeter group acting on a polynomial ring.  Lusztig \cite{Lusztig89}
described
the algebra in terms of generators and relations that
deform the action of the group but
preserve the polynomial ring.
Around the same time, Drinfeld~\cite{Drinfeld} gave a deformation
of the same skew group algebra,
for arbitrary finite groups, by instead deforming
the polynomial ring while preserving the action of the group.
In characteristic zero, Ram and the first author~\cite{RamShepler} 
showed that these two
deformations are isomorphic by producing an explicit 
conversion map.
In this paper, we generalize that result from Coxeter groups
to arbitrary finite groups
and show that any algebra modeled on Lusztig's graded affine Hecke algebra
in characteristic zero 
must arise via Drinfeld's construction.
In fact, we explain that in the nonmodular setting, when the characteristic of the 
underlying field does not divide the order of the acting group,
any deformation of the skew group algebra that changes the action
of the group must be equivalent to one that does not.
We also give explicit isomorphisms with which to convert between
algebra templates.

These results fail in positive characteristic.
Indeed, in the modular setting, the Hochschild cohomology of the 
acting group algebra is nontrivial and thus gives rise to deformations
of the skew group algebra of a completely new flavor.
Study has recently begun on the representation theory
of deformations of skew group algebras in positive characteristic
(see~\cite{BalagovicChen, BalagovicChen2, Bellamy-Martino, Brown-Changtong, Norton}
for example) and these new types of 
deformations may help in developing a unified theory.
In addition to combinatorial tools,
we rely on homological algebra to reveal and to 
understand these new types of deformations that do not appear in
characteristic zero. 

We first establish in Section~\ref{sec:quadratic} a
Poincar\'e-Birkhoff-Witt (PBW) property for nonhomogeneous 
quadratic algebras
(noncommutative) that simultaneously generalize
the construction patterns of Drinfeld and of Lusztig. 
We use the PBW property to 
describe  deformations of the skew group algebra
as nonhomogeneous quadratic algebras
in terms of generators and relations, 
defining the algebras $\cH_{\lambda,\kappa}$ that feature in the rest
of the article. Theorem~\ref{thm:PBWconditions} gives conditions on the
parameter functions $\lambda$ and $\kappa$ that are equivalent to $\cH_{\lambda,\kappa}$
being a PBW deformation. In Section~\ref{sec:LusztigyIsDrinfeldy},  we
give an explicit isomorphism between the Lusztig style and
the Drinfeld style deformations in characteristic zero.
We give examples in
Section~\ref{sec:modular} to show that such an isomorphism
can fail to exist in positive
characteristic.
In the next two sections, we recall the notion of graded deformation
and a resolution required to
make a precise connection between our deformations and Hochschild cohomology. Theorem~\ref{thm:PBWcohomologyconditions} shows that our conditions in
Theorem~\ref{thm:PBWconditions}, obtained combinatorially,  
correspond exactly to explicit homological conditions.
In Proposition~\ref{prop:deformation} 
and Theorem~\ref{thm:whichdeformations}, 
we pinpoint precisely the
class of deformations comprising
the algebras $\cH_{\lambda,\kappa}$.

Throughout this article, $k$ denotes a field (of arbitrary characteristic)
and tensor symbols without subscript denote tensor product over $k$: 
$\otimes = \otimes_k$.  (Tensor products over other rings will always
be indicated). We assume all $k$-algebras have unity and  
we use the notation $\NN=\ZZ_{\geq0}$ for the 
nonnegative integers. 

\section{Filtered quadratic algebras}\label{sec:quadratic}

Let $V$ be a finite dimensional
vector space over the field $k$.  We consider a finite group $G$ acting linearly
on $V$ and extend the action so that $G$ acts by automorphisms
on the tensor algebra (free associative $k$-algebra)
$T(V)$ and on the symmetric algebra $S(V)$ 
of $V$ over $k$.
We write $^g v$ for the action of any $g$ in $G$ on any element $v$
in $V$, $S(V)$, or $T(V)$, to distinguish from the product
of $g$ and $v$ in an algebra, and we let $V^g\subset V$ be the set
of vectors fixed pointwise by $g$.
We also identify the identity $1$ of the field $k$ with the identity group
element of $G$ in the group ring $kG$.

We consider an algebra generated by $V$ and the group
$G$ with a set of relations
that corresponds to deforming both the symmetric algebra $S(V)$ on $V$
{\em and} the action of the group $G$.
Specifically, let $\ld$ and $\kappa$ be linear parameter functions, 
$$
\kappa:V\ot V \rightarrow kG,\quad
\ld:kG\ot    V \rightarrow kG\, ,
$$
with $\kappa$ alternating.
We write $\kappa(v,w)$ for $\kappa(v\ot w)$
and $\ld(g,v)$ for $\ld(g\ot v)$ for ease
with notation throughout the article
(as $\ld$ and $\kappa$ both define bilinear functions).
Let
$\cH_{\ld,\kappa} $ 
be the associative $k$-algebra generated by a basis of $V$ together
with the group algebra $kG$, subject to the relations in $kG$ and
\begin{equation}\label{relations}
\begin{aligned}
&\text{a.} \ \ \ vw-wv=\kappa(v,w)\\
&\text{b.}\ \ \ gv- \,  ^g\! v g = \ld(g,v)
\end{aligned}
\end{equation}
for all $v,w$ in $V$ and $g$ in $G$.

Recall that the {\bf skew group algebra} of $G$ acting by automorphisms
on a $k$-algebra $R$ (for example, $R=S(V)$ or $R=T(V)$)
is the
semidirect product algebra $R\# G$: 
It is the $k$-vector space $R\otimes kG$
together with multiplication given by 
$$(r \otimes g)(s\otimes h)=r ( \,^{g}\! s)\otimes g h$$
for all $r,s$ in $R$ and $g,h$ in $G$.
To simplify notation, we write $rg$ in place of $r\ot g$.
Note that the skew group algebra $S(V)\# G$ is precisely the algebra $\cH_{0,0}$. 

We filter $\cH_{\ld,\kappa}$ by degree,  assigning
$\deg v=1$ and $\deg g =0$ for each $v$ in $V$ and $g$ in $G$.
Then the associated graded algebra 
$\gr\cH_{\ld,\kappa}$ 
is a quotient of 
its homogeneous version $S(V)\# G$
obtained by crossing out all but the
highest homogeneous part of each generating relation
defining $\cH_{\ld,\kappa}$
(see Li~\cite[Theorem~3.2]{Li2012}
or Braverman and Gaitsgory~\cite{BG}).
Recall that a ``PBW property'' on a filtered algebra
indicates that the homogeneous version (with respect
to some generating set of relations) and its 
associated graded algebra coincide.
Thus we say that $\cH_{\ld,\kappa}$ 
exhibits the {\bf Poincar\'e-Birkhoff-Witt (PBW) property}
(or is of {\bf PBW type})
when
$$
\gr(\cH_{\ld,\kappa}) \cong S(V)\# G
$$
as graded algebras.
In this case, we call $\cH_{\ld,\kappa}$ a {\bf PBW deformation}
of $S(V)\# G$.
Note that the  algebra $\cH_{\ld,\kappa}$ exhibits the PBW property
if and only if it has  basis 
$\{v_1^{i_1} v_2^{i_2} \cdots v_m^{i_m} g:
i_j \in \NN, g\in G\}$ as a $k$-vector space, where
$v_1,\ldots, v_m$ is any basis of $V$. 
In Theorem~\ref{thm:PBWconditions} below, we give necessary
and sufficient conditions on $\ld$ and $\kappa$ in order
for the PBW property to hold. 

Lusztig's graded version of the affine Hecke algebra
(see~\cite{Lusztig88, Lusztig89}) for a finite Coxeter group
is a special case of a quotient algebra $\cH_{\ld, 0}$
(with $\kappa\equiv 0$) displaying the PBW property; 
the parameter $\ld$ is defined using a simple root system
(see Section~\ref{sec:LusztigyIsDrinfeldy}).
Rational Cherednik algebras and symplectic reflection algebras
are also examples; they arise as quotient algebras $\cH_{0, \kappa}$
(with $\ld\equiv 0$) exhibiting the PBW property.

\begin{example}\label{example1:cyclic}
{\em
Let $k$ be a field of characteristic $p$
and consider a cyclic group $G$ of prime order $p$, generated by $g$.
Suppose $V=k^2$ with basis $v,w$ and let $g$ act as the matrix
$$
   \left(\begin{array}{cc} 1&1\\0&1\end{array}\right)
$$
on the ordered basis $v,w$.
Define an alternating
linear parameter function $\kappa:V\ot V\rightarrow kG$ 
by $\kappa(v,w)=g$ and a linear parameter function $\ld: kG\ot V
\rightarrow kG$ by $\ld(g^i,v)=0, \ \ld(g^i,w)=ig^{i-1}$ for $i=0,1,\ldots, p-1$.
Then the algebra $\cH_{\ld,\kappa}$ is a quotient of
a free $k$-algebra on three generators:
$$\cH_{\ld,\kappa}= k\langle v, w, g\rangle
 / (gv-vg,\ gw-vg-wg-1,\ vw-wv-g,\ g^p-1)\, .
$$
We will see that $\cH_{\lambda, \kappa}$ has the PBW property
by applying Theorem~\ref{thm:PBWconditions} below (see
Example~\ref{ex:apply-thm}). 
Alternatively, this may be checked directly by applying Bergman's
Diamond Lemma \cite{Bergman} to this small example. 
}
\end{example}

\section{Poincar\'e-Birkhoff-Witt property}
In this section,  
we establish necessary and  sufficient conditions for the PBW property
to hold for the filtered
quadratic algebra $\cH_{\ld,\kappa}$ 
defined by relations (\ref{relations}).
Note that a PBW condition on $\cH_{\ld,\kappa}$ 
implies that none of the elements in $kG$ collapse in $\cH_{\ld, \kappa}$.
This in turn gives 
conditions on $\lambda$ and $\kappa$ when
their images are expanded in terms of the elements of $G$.
For each $g$ in $G$, let 
$\lambda_g: kG\ot V\rightarrow k$ and
$\kappa_g: V\otimes V\rightarrow k$ be the $k$-linear maps for which 
$$
   \lambda(h,v)=\sum_{g\in G}\lambda_g(h,v) g,
    \ \ \ \ \ \kappa(u,v) =\sum_{g\in G} \kappa_g(u,v)g 
$$
for all $h$ in $G$ and $u,v$ in $V$. 
We now record PBW conditions on the parameters $\lambda$ and
$\kappa$.  Note that the first condition below implies that $\ld$ is determined
by its values on generators of the group $G$,
and the left hand side of the second condition measures
the failure of $\kappa$ to be $G$-invariant.
\begin{thm}\label{thm:PBWconditions}
Let $k$ be a field of arbitrary characteristic.
The algebra $\cH_{\ld,\kappa}$ exhibits the PBW property if, and only if, 
the following conditions hold
for all $g,h$ in $G$ and $u,v,w$ in $V$.
\begin{enumerate}
\item\label{cocycle21}
\, \ \rule[0ex]{0ex}{3ex}
$\ld(gh,v)=\ld(g,\, ^hv) h + g\ld(h,v)$ in $kG$.
\item\label{firstobstruction12}\ \ \rule[0ex]{0ex}{3ex}
$\kappa(\, ^gu,\, ^g v)g-g\kappa(u,v)
=
\ld\bigl(\ld(g,v),u\bigr)-\ld\bigl(\ld(g,u),v\bigr)$ in $kG$.
\item\label{cocycle12}\ \ \rule[0ex]{0ex}{3ex}
$\ld_h(g,v)(\, ^hu-\, ^gu)=\ld_h(g,u)(\, ^hv-\, ^gv)
$ in $V$.
\item\label{firstobstruction03}
\ \ \rule[0ex]{0ex}{3ex}
$\kappa_g(u,v)(^gw-w)+\kappa_g(v,w)(^gu-u)+\kappa_g(w,u)(^gv-v)=0$ in $V$.
\item\label{mixedbracket}
\ \ \rule[0ex]{0ex}{3ex}
$\ld\bigl(\kappa(u,v),w\bigr)
+\ld\bigl(\kappa(v,w),u\bigr)+\ld\bigl(\kappa(w,u),v\bigr)=0$ in $kG$.
\end{enumerate}
\end{thm}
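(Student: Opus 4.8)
The plan is to establish the PBW property via the Diamond Lemma of Bergman applied to the presentation of $\cH_{\ld,\kappa}$ by relations (\ref{relations}) together with the relations of $kG$. First I would fix an ordered basis $v_1,\dots,v_m$ of $V$ and a listing $g_1,\dots,g_N$ of the elements of $G$, and choose a reduction system turning each generating relation into a rewriting rule: rule (a) rewrites $v_jv_i$ (for $j>i$) as $v_iv_j - \kappa(v_i,v_j)$; rule (b) rewrites $gv$ as ${}^g\!v\,g + \ld(g,v)$, moving group elements to the right of vector elements; and the multiplication table of $G$ gives rules $g_ig_j \to g_{k}$ where $g_ig_j=g_k$ in $G$. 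With a degree-compatible monomial order (degree in $V$ first, then a suitable tie-break), every rule is compatible with the order, so the reductions terminate. The ambiguities to resolve are the overlap ambiguities among these rules; confluence of all ambiguities is, by the Diamond Lemma, equivalent to the algebra having the claimed PBW basis $\{v_1^{i_1}\cdots v_m^{i_m} g\}$, hence to the PBW property. So the entire theorem reduces to: each overlap ambiguity is resolvable if and only if conditions (1)--(5) hold.

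Next I would enumerate the overlap ambiguities by type and match each to one of the five conditions. The triple product $g_ig_jg_k$ with the $G$-multiplication rules is automatically confluent since $G$ is a group (associativity), so it contributes nothing. The overlap $(g_ig_j)v$ versus $g_i(g_jv)$, reducing one way through the group product and the other way through rule (b), yields condition (\ref{cocycle21}): the cocycle-type identity $\ld(gh,v)=\ld(g,{}^h v)h+g\,\ld(h,v)$. The overlap $g(v_jv_i)$ versus $(gv_j)v_i$ for $j>i$ — applying rule (a) inside, versus rule (b) twice — produces, after collecting the group-algebra part and the $V$-part separately, conditions (\ref{firstobstruction12}) and (\ref{cocycle12}): the $kG$-component gives $\kappa({}^g u,{}^g v)g - g\,\kappa(u,v)=\ld(\ld(g,v),u)-\ld(\ld(g,u),v)$, while the $V$-component (the terms that still carry a ``dangling'' vector coming from $\ld_h(g,v)$ times ${}^h u - {}^g u$) gives the symmetry relation (\ref{cocycle12}). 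Finally the overlap on a triple $v_kv_jv_i$ with $k>j>i$, resolved by applying rule (a) to the left pair first versus the right pair first, produces the remaining conditions: separating the pure-$kG$ part gives the ``mixed Jacobi'' identity (\ref{mixedbracket}), $\sum_{\text{cyc}}\ld(\kappa(u,v),w)=0$, and the part where a $\kappa_g$ coefficient multiplies a difference ${}^g w - w$ gives condition (\ref{firstobstruction03}). One must also check overlaps of rule (b) with itself, e.g. $g(hv)$ patterns are already covered by the $(g_ig_j)v$ case, and there is no overlap of rule (a) with rule (b) of the form $v_j v_i$ against $g v$ since they share no common subword; so the list above is complete.

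The routine-but-careful part is the bookkeeping inside each overlap resolution: one reduces each side of an ambiguity to a normal-form-candidate expression in the spanning set, expands $\kappa$ and $\ld$ via their $G$-components $\kappa_g,\ld_g$, and then separates the resulting equality into its ``scalar times group element'' part (living in $kG$) and its ``scalar times group element times a single vector'' part (living in $V\otimes kG$, which forces a $V$-valued identity). Because $\ld$ takes values in $kG$ rather than in $k$, reducing a word like $gv$ reintroduces lower-degree terms that are \emph{not} yet in normal form — they must be reduced further by rule (b) again — so one has to track a second layer of reductions; this is exactly the mechanism that makes conditions (\ref{cocycle12}) and (\ref{firstobstruction03}) genuinely necessary, as opposed to the purely cohomological conditions one sees when $\ld$ is scalar-valued. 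I expect this separation-of-components step, and in particular verifying that \emph{no further} independent conditions fall out of the two ``mixed'' overlaps, to be the main obstacle; it is where the hypothesis that $\kappa$ is alternating and the careful choice of reduction order are used, and where one must confirm that the five listed conditions are not merely sufficient but also exhaust the constraints.
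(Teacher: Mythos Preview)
Your proposal is correct and follows essentially the same approach as the paper: both arguments apply Bergman's Diamond Lemma, identify the overlap ambiguities $ghv$, $gvw$, and $uvw$ (you also note that $ghk$ resolves automatically by associativity of $G$), and show that resolving them is equivalent to conditions (1)--(5). In fact you supply more detail than the paper does---the paper simply states that setting the ambiguities to zero ``results in the five conditions of the theorem'' and omits the calculations---and your attribution of condition (\ref{cocycle21}) to the $ghv$ overlap, conditions (\ref{firstobstruction12}) and (\ref{cocycle12}) to the $kG$- and $V$-components of the $gvw$ overlap, and conditions (\ref{firstobstruction03}) and (\ref{mixedbracket}) to the $uvw$ overlap is exactly the expected decomposition.
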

\begin{proof}
The result follows from careful application 
of Bergman's Diamond Lemma~\cite{Bergman}.
Choose a monoid partial order on $\{v_1,\ldots, v_m, g:g\in G\}$
for some basis $v_1,\ldots, v_m$ of $V$
compatible with the reduction system (see~\cite{AlgorithmicMethods})
defined by the generating
relations~(\ref{relations}) of $\cH_{\ld, \kappa}$ which is a total order
(see~\cite[Remarks 7.1 and~7.2]{LevandovskyyShepler} for an example).
We check overlap ambiguities of the form $ghv$, $gvw, uvw$
for $g,h$ in $G$ and $u,v,w$ in $V$.
Setting ambiguities to zero (for example, see~\cite{doa})
results in the five conditions of the theorem.  We
omit details as the calculations are long but
straightforward.  \end{proof}

The conditions of Theorem~\ref{thm:PBWconditions} are quite restrictive.
The following corollaries contain some identities derived from these
conditions. These identities may be applied to narrow the possible
parameter functions $\ld, \kappa$ in examples. 
We write $\ker\kappa_g$ for the set
$\{v\in V:\kappa_g(v,w)=0\ \mbox{ for all } w \in V\}$.
The following corollary is from~\cite{GriffethShepler}.
\begin{cor}
Let $k$ be a field of arbitrary characteristic.
Suppose $\cH_{\lambda, \kappa}$ is of PBW type. Then for every $g$ in $G$,
either $\kappa_g \equiv 0$ or one of the following statements holds.
\begin{enumerate}
\item[(a)] $\codim V^g = 0$ (i.e., $g$ acts as the identity on $V$).
\item[(b)]
$\codim V^g = 1$ and $\kappa_g(v_1,v_2)=0$
for all $v_1,v_2$ in $V^g$.
\item[(c)]
$\codim V^g=2$ with
$\ker\kappa_g = V^g$\\ (and thus $\kappa_g$ is determined
by its values on a subspace complement to $V^g$).
\end{enumerate}
\end{cor}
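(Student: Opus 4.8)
The plan is to derive the claimed trichotomy directly from conditions \eqref{firstobstruction03} and \eqref{cocycle12} of Theorem~\ref{thm:PBWconditions}, working one group element $g$ at a time and treating $\kappa_g$ as a fixed alternating $k$-bilinear form on $V$. Fix $g\in G$ and suppose $\kappa_g\not\equiv 0$. The key object is the linear operator $\sigma_g - \id$ on $V$, where $\sigma_g$ denotes the action of $g$; note that $V^g = \ker(\sigma_g-\id)$, so $\codim V^g$ is the rank of $\sigma_g - \id$. I would first rephrase condition~\eqref{firstobstruction03}: for all $u,v,w\in V$,
\begin{equation*}
\kappa_g(u,v)\,(\sigma_g-\id)(w) + \kappa_g(v,w)\,(\sigma_g-\id)(u) + \kappa_g(w,u)\,(\sigma_g-\id)(v) = 0 .
\end{equation*}
This is a statement that the alternating trilinear-looking expression built from $\kappa_g$ and $(\sigma_g-\id)$ vanishes identically in $V$.

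Next I would split into the cases $\codim V^g = 0$, $1$, or $\geq 2$. If $\codim V^g = 0$ then $g$ acts as the identity and we are in case~(a), with nothing more to prove. If $\codim V^g = 1$, pick $v_1,v_2\in V^g$; then $(\sigma_g-\id)(v_1) = (\sigma_g-\id)(v_2) = 0$, and choosing $w$ outside $V^g$ so that $(\sigma_g-\id)(w)\neq 0$, the displayed identity with $(u,v,w) = (v_1,v_2,w)$ forces $\kappa_g(v_1,v_2)(\sigma_g-\id)(w) = 0$, hence $\kappa_g(v_1,v_2) = 0$; this is case~(b). The substance of the argument is the case $\codim V^g \geq 2$, where I must show $\codim V^g$ is exactly $2$ and $\ker\kappa_g = V^g$. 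Here the strategy is: choose $u,v$ linearly independent modulo $V^g$, so that $(\sigma_g-\id)(u)$ and $(\sigma_g-\id)(v)$ are linearly independent in $V$. For any $w\in V^g$, the displayed identity reduces to $\kappa_g(u,v)(\sigma_g-\id)(w) + \kappa_g(v,w)(\sigma_g-\id)(u) + \kappa_g(w,u)(\sigma_g-\id)(v) = 0$; since $(\sigma_g-\id)(w)=0$, linear independence of $(\sigma_g-\id)(u),(\sigma_g-\id)(v)$ gives $\kappa_g(v,w) = \kappa_g(w,u) = 0$ for all such $w$, i.e.\ $V^g\subseteq\ker\kappa_g$. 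Conversely, if some $z\notin V^g$ lay in $\ker\kappa_g$, I would derive a contradiction with $\kappa_g\not\equiv 0$: extend to a basis and use the displayed identity together with the nondegeneracy forced on a complement of $V^g$. To finish this case I must also rule out $\codim V^g \geq 3$: if $(\sigma_g-\id)(u),(\sigma_g-\id)(v),(\sigma_g-\id)(t)$ were three independent vectors with $u,v,t$ independent mod $V^g$, then feeding various triples into the displayed identity and comparing coefficients against these independent vectors forces all the relevant values of $\kappa_g$ on the complement to vanish, which together with $V^g\subseteq\ker\kappa_g$ (once established) contradicts $\kappa_g\not\equiv0$; alternatively one shows directly that a nonzero alternating form satisfying the identity has rank-$2$ radical complement. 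At this point I would still need to use condition~\eqref{cocycle12} if the above does not by itself pin down $\codim V^g=2$ — in practice the cleanest route may be to first establish $V^g\subseteq\ker\kappa_g$ whenever $\codim V^g\geq 2$, then observe that $\kappa_g$ descends to a nonzero alternating form on $V/V^g$ whose radical is trivial, and a nonzero alternating form with trivial radical can only satisfy the reduced identity $\bar\kappa_g(\bar u,\bar v)\bar w + \bar\kappa_g(\bar v,\bar w)\bar u + \bar\kappa_g(\bar w,\bar u)\bar v = 0$ (in $V/V^g$, identifying via the injective map induced by $\sigma_g-\id$) when $\dim V/V^g \leq 2$, because in dimension $\geq 3$ one can choose $\bar u,\bar v,\bar w$ independent and obtain a nontrivial linear dependence.

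The main obstacle I anticipate is the bookkeeping in the $\codim V^g\geq 2$ case: showing simultaneously that the codimension is forced down to $2$ and that the radical of $\kappa_g$ is exactly $V^g$, without circular reasoning. The delicate point is that $(\sigma_g-\id)$ need not be injective on a complement of $V^g$ in general — wait, it is, since $V^g$ is precisely its kernel — so $(\sigma_g - \id)$ restricts to an isomorphism from any complement $W$ of $V^g$ onto $(\sigma_g-\id)(V)$, and this is what makes the coefficient-comparison arguments legitimate. I would organize the proof so that this isomorphism is noted once at the start, then all three cases become linear-algebra consequences of the single displayed identity, with condition~\eqref{cocycle12} invoked only if needed to exclude a borderline configuration. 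I do not expect to need conditions \eqref{cocycle21}, \eqref{firstobstruction12}, or \eqref{mixedbracket} at all, since those constrain $\lambda$ or mix $\lambda$ with $\kappa$, whereas the statement is purely about the $\kappa_g$.
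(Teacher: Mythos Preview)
Your approach is correct and rests on the same single ingredient as the paper's proof, namely Condition~(\ref{firstobstruction03}) of Theorem~\ref{thm:PBWconditions}; Condition~(\ref{cocycle12}) is never needed, so you can drop the hedge about invoking it for a ``borderline configuration.'' The paper organizes the argument more directly: it begins by fixing $u,v$ with $\kappa_g(u,v)\neq 0$ and reads the identity as $({}^gw-w) \in \Span\{{}^gu-u,\ {}^gv-v\}$ for every $w$, so $\codim V^g\leq 2$ falls out in one line before any case analysis; your route (case-split on $\codim V^g$, then in the $\geq 2$ case show $V^g\subseteq\ker\kappa_g$ and pass to $V/V^g$) reaches the same conclusion but with more bookkeeping. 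One small expository point: your claim that the descended form on $V/V^g$ has trivial radical should come \emph{after} you have forced $\dim V/V^g = 2$, not before---a nonzero alternating form on a $2$-dimensional space is automatically nondegenerate, which is what gives $\ker\kappa_g = V^g$ exactly.
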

\begin{proof}
Fix an element $g$ of $G$ that does not act as the identity on $V$. 
Suppose $\kappa_g$ is not identically zero, that is,
$\kappa_g(u,v)\neq 0$ for some
$u,v$ in $V$.  
We compare the dimensions of the kernel and image
of the transformation $T$ defined by $g-1$ acting
on $V$.  (Note that $V$ may lack a $G$-invariant
inner product, and the action of $T$ will replace classical arguments 
that involve the orthogonal complement of $V^g$.)
Condition~\ref{firstobstruction03} of Theorem~\ref{thm:PBWconditions}
implies that the image of $T$ is spanned by
$\,^gu-u$ and $\,^gv-v$ and thus
$$\codim V^g = \codim \Ker(T) = \dim \mbox{Im}(T)\leq 2. $$
Now assume $\codim V^g=1$.  
Condition~\ref{firstobstruction03} 
of Theorem~\ref{thm:PBWconditions} then implies that
$\kappa_g(v_1,v_2)=0$ whenever both $v_1,v_2$ lie in $V^g$
(just take some $w\not\in V^g$).
Now suppose $\codim V^g=2$ and thus
$^gu-u, \,^gv-v$ are linearly independent.
Then so are $u,v$, and $V=V^g\oplus ku\oplus kv$.
Then $$\kappa_g(u,w)=\kappa_g(v,w)=\kappa_g(v_1, w)=0$$ 
for any $v_1, w$ in $V^g$ 
(again by Condition~\ref{firstobstruction03})
and thus $\ker\kappa_g = V^g$ with $\kappa_g$ supported on
the $2$-dimensional space spanned by $u$ and $v$.
\end{proof}

We say $\ld(g,*)$ is {\bf supported} on a set $S\subset G$
whenever $\ld_h(g,v)=0$ for all $v$ in $V$ and $h$ not in $S$. 
\begin{cor}\label{cor:lambda}
Let $k$ be a field of arbitrary characteristic.
Suppose $\cH_{\lambda,\kappa}$ is of PBW type. Then 
for all $g$ in $G$:
\begin{enumerate}
\item
$\ld(1,*)$ is identically zero:
$\lambda(1,v)=0$ for all $v$ in $V$;
\item \label{lg-inverse} 
$\ld(g,*)$ determines $\ld(g^{-1}, *)$ by
  $$ g\lambda(g^{-1},v)= - \lambda(g, {}^{g^{-1}}v)g^{-1}\, ;$$
\item
$\ld(g,*)$ can be defined recursively:
For any $j\geq 1$,
$$
\ld(g^j,v)=\sum_{i=0}^{j-1} g^{j-1-i}\, \ld(g,\, ^{g^i}v)\, g^i\ ;
$$
\item 
$\ld(g,*)$ is supported on $h$ in $G$ with $h^{-1}g$ either
a reflection or the identity on $V$. 
If it is a reflection, then 
$\ld_h(g,v)=0$ for all $v$ on the reflecting hyperplane.
\item
If $V^g\neq V$,
$\lambda_1(g,v)=0$
unless $g$ is a reflection and $v\notin V^g$.
\end{enumerate}
\end{cor}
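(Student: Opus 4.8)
The plan is to extract all five statements from the cocycle-type identity~\eqref{cocycle21} and the linear identity~\eqref{cocycle12} of Theorem~\ref{thm:PBWconditions}, using for part~(4) a dimension count on the endomorphism $u\mapsto{}^hu-{}^gu$ of $V$ parallel to the argument given for $\kappa_g$ in the preceding corollary.

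First I would prove~(1) by setting $g=h=1$ in~\eqref{cocycle21}, which reads $\ld(1,v)=\ld(1,v)+\ld(1,v)$ and hence forces $\ld(1,v)=0$ for every $v$ in $V$, in any characteristic. Statement~(2) is then immediate upon taking $h=g^{-1}$ in~\eqref{cocycle21}: the left side vanishes by~(1), leaving $0=\ld(g,{}^{g^{-1}}v)g^{-1}+g\,\ld(g^{-1},v)$. Statement~(3) is an induction on $j$ with trivial base case $j=1$; for the step, apply~\eqref{cocycle21} to $\ld(g^{j+1},v)=\ld(g\cdot g^{j},v)$ and substitute the inductive formula for $g\,\ld(g^{j},v)$, observing that the extra term $\ld(g,{}^{g^{j}}v)g^{j}$ supplies precisely the missing $i=j$ summand.

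The substantive step is~(4). Fix $g,h$ in $G$ and suppose $\ld_h(g,v_0)\neq 0$ for some $v_0$ in $V$. Reading~\eqref{cocycle12} with this fixed $v_0$ and arbitrary $u$ in $V$ exhibits ${}^hu-{}^gu=\bigl(\ld_h(g,u)/\ld_h(g,v_0)\bigr)\,({}^hv_0-{}^gv_0)$, so the image of the endomorphism $T\colon u\mapsto{}^hu-{}^gu$ of $V$ is at most one-dimensional. Since $\ker T=\{u:{}^{g^{-1}h}u=u\}=V^{g^{-1}h}=V^{h^{-1}g}$, rank–nullity gives $\codim V^{h^{-1}g}=\dim(\operatorname{Im}T)\leq 1$; that is, $h^{-1}g$ is a reflection or acts as the identity on $V$, which is the asserted support condition. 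If $h^{-1}g$ is a reflection and $v$ lies on its reflecting hyperplane $V^{h^{-1}g}$, then ${}^hv-{}^gv=0$, so~\eqref{cocycle12} evaluated against any $u$ with ${}^hu\neq{}^gu$ (such $u$ exists because $h^{-1}g\neq1$) forces $\ld_h(g,v)=0$. Finally, (5) is the case $h=1$ of~(4): the hypothesis $V^g\neq V$ excludes $g$ acting as the identity, so $\ld_1(g,*)$ can be nonzero only when $g$ is a reflection, and then only for $v$ off the reflecting hyperplane $V^g$.

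I expect the only genuine subtlety to lie in the linear-algebra bookkeeping of~(4): keeping straight that the pertinent fixed space is $V^{h^{-1}g}$ (equivalently $V^{g^{-1}h}$), checking that the scalar $\ld_h(g,u)/\ld_h(g,v_0)$ is legitimate because $\ld_h(g,v_0)\neq0$, and recording that ``reflection'' here means exactly $\codim V^{h^{-1}g}=1$, with no constraint on the order of $h^{-1}g$. Everything else is formal manipulation of the identities in Theorem~\ref{thm:PBWconditions}.
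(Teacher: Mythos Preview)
Your proposal is correct and follows essentially the same route as the paper: parts~(1)--(3) are derived from Condition~\eqref{cocycle21} of Theorem~\ref{thm:PBWconditions} exactly as you describe, part~(4) uses Condition~\eqref{cocycle12} together with the rank--nullity argument on the endomorphism $u\mapsto{}^hu-{}^gu$, and part~(5) is the specialization $h=1$. The only cosmetic difference in~(4) is that the paper argues by cases on $\dim\operatorname{Im}T$ (assuming first that $g$ and $h$ act differently and then treating $\dim\operatorname{Im}T\geq 2$ separately), whereas you phrase it contrapositively by starting from a witness $v_0$ with $\lambda_h(g,v_0)\neq 0$; the underlying linear algebra is identical.
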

\begin{proof}
Setting $g=h=1$ in Theorem~\ref{thm:PBWconditions} (1), we obtain
$\lambda(1,v)=\lambda(1,v)+\lambda(1,v) ,$
and thus $\lambda(1,v)=0$.
Setting $h=g^{-1}$, we find that 
$$0=\lambda(1,v) = \lambda(g, {}^{g^{-1}}v)g^{-1} + g \lambda(g^{-1},v).$$
Recursively applying the same condition  yields the expression
for $\ld(g^j,v)$.

Now suppose
the action of group elements $g$ and $h$ on $V$ do not coincide,
so that the image of the transformation $T$ on $V$ defined by
$h-g$ has dimension at least $1$.  
Fix any nonzero $u$ in $V$ with $\, ^gu\neq\, ^hu$.
Then Condition~\ref{cocycle12} of
Theorem~\ref{thm:PBWconditions}
implies that $\ld_h(g,*)$ vanishes on $V^{h^{-1}g}=\ker(T)$.
If $\dim\im(T)\geq 2$, then there is some $w$ in $V$
with
$\, ^h u-\, ^gu$ and  $^h w-\, ^gw$ linearly independent
in $\im(T)$ and thus $\ld_h(g,u)=0$, i.e.,
$\ld_h(g,*)$ vanishes on the set complement of $V^{h^{-1}g}$ as well.
Therefore, $\ld_h(g,*)$ is identically zero unless
$\codim\ker(T)=1$ and $g^{-1}h$ is a reflection,
in which case $\ld_h(g,*)$ is supported off
the reflecting hyperplane.
The last statement of the corollary is merely the special case
when $h=1$.
\end{proof}

\begin{example}\label{ex:apply-thm}
{\em 
Applying Theorem~\ref{thm:PBWconditions}, straightforward calculations
show that the algebra $\cH_{\lambda,\kappa}$ of Example~\ref{example1:cyclic}
has the PBW property.} 
\end{example}

\section{Nonmodular setting and graded affine Hecke algebras}
\label{sec:LusztigyIsDrinfeldy}
Lusztig's graded version of the affine Hecke algebra 
is defined for a Coxeter group
$G \subseteq \text{GL}(V)$ with natural reflection representation
$\RR^m$ extended to $V=\CC^m$. This algebra deforms the
skew group relation defining $S(V)\# G$ 
and not the commutator relation defining 
$S(V)$. Indeed,
the {\bf graded affine Hecke algebra}
$\cH_{\text{gr\hphantom{.}aff}}$ 
(often just called the ``graded Hecke algebra'')
with parameter $\lambda$
is the associative $\CC$-algebra
generated by the group algebra $\CC G$ and the symmetric algebra $S(V)$
with relations
$$ gv=\,  ^gvg+\lambda(g,v) 
\quad\text{ for all }
v,w\in V\text{  and  } G $$
where
$\lambda:\CC G \otimes V \mapsto \CC$
is a $G$-invariant map (for $G$ acting on
itself by conjugation) vanishing on pairs
$(g,v)$ with $g$ a reflection
whose reflecting hyperplane contains $v$.

Graded affine Hecke algebras are often defined merely
using a class function on $G$ and a set of simple reflections
$\mathcal{S}$ generating $G$.
This allows one to record concretely 
the degrees of freedom in defining these algebras.
Indeed, since $kG$ is a subalgebra
of $\cH_{\text{gr\hphantom{.}aff}}$,
the parameter function
$\lambda$ is determined by its values on $\mathcal{S}$
(see Theorem~\ref{thm:PBWconditions}~(\ref{cocycle21}))
and since $\lambda$ is linear, it
can be expressed using
linear forms in $V^*$ defining the reflecting hyperplanes for $G$.
We fix a $G$-invariant inner product on $V$.
Then for any reflection $g$ in $\mathcal{S}$ and $v$ in $V$,
$$
\lambda(g,v)=c_g \, \frac{2 \langle v, \alpha_g\rangle}
{\hphantom{2}\langle \alpha_g, \alpha_g \rangle } 
=c_g \langle v, \alpha_g^\vee \rangle 
= c_g\, \Big(\frac{v-\, ^gv}{\alpha_g}\Big)$$
where
$c:G\rightarrow \CC$, $g\mapsto c_g$
is some conjugation invariant function 
(i.e., $c_g=c_{hgh^{-1}}$ for all $g,h$ in $G$).
Here, each $\alpha_g$ is a fixed ``root vector'' 
for the reflection $g$ (i.e., a vector in $V$ perpendicular
to the reflecting hyperplane fixed pointwise by $g$)
chosen so that conjugate reflections have root
vectors in the same orbit.
Note that the parameter $\lambda$ can be extended to
$\CC G\ot S(V)$ using Demazure-Lusztig (BGG) operators,
see~\cite{KriloffRam} for example.

Every graded affine Hecke algebra thus arises as an
algebra $\cH_{\lambda, \kappa}$ given by relations (\ref{relations}) 
with $\kappa\equiv 0$, in fact, an algebra with the PBW property
(see~\cite{Lusztig89}):
$$
\cH_{\text{gr\hphantom{,}aff}}
=\cH_{\lambda, 0}
$$
and $\cH_{\text{gr\hphantom{,}aff}}\cong S(V)\# G $
as $\CC$-vector spaces.
Ram and the first author~\cite{RamShepler} showed
that every graded affine Hecke algebra arises as a special case of
a {\bf Drinfeld Hecke algebra}, i.e.,
every $\cH_{\text{gr\hphantom{,}aff}}$
is isomorphic (as a filtered algebra) to 
$\cH_{0,\kappa}$ for some parameter $\kappa$:
$$\cH_{\lambda,0} = \cH_{\text{gr\hphantom{.}aff}}\cong \cH_{0,\kappa}\, .$$
Indeed, they constructed
an explicit isomorphism between any graded affine Hecke algebra
(for $G$ a Coxeter group) and a Drinfeld Hecke algebra $\cH_{0, \kappa}$
with parameter $\kappa$ defined in terms of the class function
$c$ and positive roots.

In this section, we prove that this result follows from a more general fact
about the algebras $\cH_{\ld,\kappa}$ for arbitrary finite groups
(not just Coxeter groups) and we give a proof valid
over arbitrary fields in the nonmodular setting:
Any $\cH_{\ld, 0}$  (deforming just the skew group relations)
with the PBW property
is isomorphic to some $\cH_{0, \kappa}$ (deforming just the relations 
for the commutative polynomial ring $S(V)$).  
In the proof, we construct an explicit isomorphism (of filtered algebras)
converting between the two types of nonhomogeneous
quadratic algebras.
In the next section, 
we give an example showing that the theorem
fails in positive characteristic.

\begin{thm}\label{LusztigyIsDrinfeldy}
Suppose $G$ acts linearly on a finite dimensional vector space $V$ over 
a field $k$
whose characteristic is coprime to $|G|$.
If the algebra
$\cH_{\lambda,0}$ exibits the PBW property
for some parameter $\lambda: kG\ot V \rightarrow kG$,
then there exists a parameter $\kappa: V\ot V\rightarrow kG$ 
with $$\cH_{0,\kappa}\cong \cH_{\ld, 0}$$ as filtered algebras.
\end{thm}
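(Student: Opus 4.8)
The plan is to build the isomorphism $\cH_{0,\kappa}\to\cH_{\ld,0}$ explicitly as a filtered-algebra map, exactly as in the Coxeter case of Ram and the first author, but using averaging over $G$ (which is available since $\mathrm{char}\,k\nmid|G|$) in place of the root-system combinatorics. The map will be the identity on $kG$; the key is to decide where to send a basis vector $v\in V$. In $\cH_{\ld,0}$ the group no longer commutes past $V$, so we look for an element $\tilde v = v + \sum_{g\in G} f(v,g)\,g$, with $f(v,g)\in k$ linear in $v$, whose commutators with the group elements vanish: we want $g\tilde v\,g^{-1} = \widetilde{{}^g v}$ in $\cH_{\ld,0}$, i.e.\ we want the $\tilde v$ to span a $G$-stable copy of $V$ inside $\cH_{\ld,0}$ on which $G$ acts by automorphisms in the \emph{untwisted} way. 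Given such a family, relation~(\ref{relations})b. is satisfied by construction, and then $\kappa$ is forced: $\kappa(v,w)$ must equal the commutator $[\tilde v,\tilde w]$ computed in $\cH_{\ld,0}$, which lands in $kG$ once we know it has filtration degree $\le 0$. One then checks this $\kappa$ is alternating and satisfies the PBW conditions of Theorem~\ref{thm:PBWconditions} (conditions~(\ref{firstobstruction12})--(\ref{mixedbracket})), so $\cH_{0,\kappa}$ has the PBW property, and the assignment $v\mapsto\tilde v$, $g\mapsto g$ extends to a filtered algebra homomorphism $\cH_{0,\kappa}\to\cH_{\ld,0}$ which is an isomorphism because it induces the identity on the associated graded $S(V)\#G$.

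The heart of the argument is producing the correction terms $f(v,g)$. Here is where nonmodularity enters. Using Corollary~\ref{cor:lambda}, $\ld(g,*)$ is supported on elements $h$ with $h^{-1}g$ a reflection or the identity; in particular $\ld$ is controlled by a manageable set of data, and condition~(\ref{cocycle21}) of Theorem~\ref{thm:PBWconditions} says $\ld$ is a $1$-cocycle for $G$ with values in the appropriate module. When $|G|$ is invertible, this cocycle is a coboundary: averaging produces an element $\xi\in kG\otimes V^*$ (equivalently a linear map $V\to kG$) with $\ld(g,v) = g\cdot\xi(v) - \xi({}^g v)\cdot g$, or some such explicit formula obtained by the standard $\tfrac1{|G|}\sum_{h} \ld(h,\cdot)$-type averaging. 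Setting $\tilde v = v - \xi(v)$ (with signs/placements adjusted to match relation~(\ref{relations})b.) then kills relation~b. One has to be a little careful that $\xi(v)\in kG$ and not something of higher filtration degree — but $\ld$ takes values in $kG$, so this is automatic.

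I expect the main obstacle to be verifying that the forced parameter $\kappa(v,w) := [\tilde v,\tilde w]_{\cH_{\ld,0}}$ genuinely lies in $kG$ (filtration degree $0$, not degree $\le 1$) and is alternating, and then that it satisfies conditions~(\ref{firstobstruction12}), (\ref{firstobstruction03}), and~(\ref{mixedbracket}) of Theorem~\ref{thm:PBWconditions}. The cleanest route is probably not to check these by brute force but to argue structurally: since $\cH_{\ld,0}$ has the PBW property, it has $k$-basis $\{v_1^{i_1}\cdots v_m^{i_m}g\}$, and rewriting in the $\tilde v_i$ (a change of variables that is unitriangular with respect to the filtration, hence still a basis) shows $\cH_{\ld,0}$ is simultaneously presented by relations of the form~(\ref{relations})a. with $\kappa=[\tilde v,\tilde w]$ and trivial $\ld$. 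Because $\cH_{\ld,0}$ already has the PBW property in the original presentation, it has the PBW property in the new one, which by Theorem~\ref{thm:PBWconditions} (applied with $\ld\equiv 0$) forces exactly conditions~(\ref{firstobstruction12}), (\ref{firstobstruction03}), (\ref{mixedbracket}) on $\kappa$ and in particular forces $\kappa$ to be $kG$-valued and alternating. This reduces the whole theorem to: (i) the averaging/coboundary computation, and (ii) the observation that a PBW algebra presented one way is PBW presented the other way once the change of variables is unitriangular — at which point the explicit filtered isomorphism $\cH_{0,\kappa}\cong\cH_{\ld,0}$ is immediate from a dimension/basis count on each filtered piece.
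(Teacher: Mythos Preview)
Your approach is essentially the paper's: construct $\gamma:V\to kG$ so that $v\mapsto v+\gamma(v)$, $g\mapsto g$ kills the skew relation, then read off $\kappa$ from the commutator $[\tilde v,\tilde w]$ and conclude by a filtered-isomorphism/dimension argument. Your cohomological framing of Condition~(\ref{cocycle21}) as a cocycle identity, with $\gamma$ the averaged coboundary, is exactly what underlies the paper's explicit formula $\gamma(v)=\tfrac{1}{|G|}\sum_{b\in G}\lambda(b,{}^{b^{-1}}v)\,b^{-1}$.

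There is, however, a genuine gap at the point you yourself flag. Your ``structural'' route---unitriangular change of basis, hence PBW in the $\tilde v_i$, hence Theorem~\ref{thm:PBWconditions} forces $\kappa$ to be $kG$-valued---is circular: Theorem~\ref{thm:PBWconditions} \emph{assumes} $\kappa$ takes values in $kG$; it does not conclude it. Having a PBW basis $\{\tilde v_1^{i_1}\cdots\tilde v_m^{i_m}g\}$ together with $g\tilde v=\widetilde{{}^gv}g$ does not by itself force $[\tilde v_i,\tilde v_j]$ into filtration degree~$0$; Drinfeld orbifold algebras (cf.\ the discussion after Theorem~\ref{thm:whichdeformations}) are exactly PBW algebras of this shape in which the commutator has a degree-$1$ piece. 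Concretely, the degree-$1$ part of $[\tilde v,\tilde w]$ is
\[
\sum_{h\in G}\bigl(\gamma_h(w)(v-{}^h v)-\gamma_h(v)(w-{}^h w)\bigr)h,
\]
and this vanishes precisely by Condition~(\ref{cocycle12}) of Theorem~\ref{thm:PBWconditions} applied to the original $\lambda$ (sum the identity $\lambda_{ab}(b,{}^{b^{-1}}v)(u-{}^a u)=\lambda_{ab}(b,{}^{b^{-1}}u)(v-{}^a v)$ over $b$). The paper does exactly this computation. Once you insert that one direct check, the rest of your outline---including the unitriangularity argument for the isomorphism and for the PBW property of $\cH_{0,\kappa}$---goes through and matches the paper's proof.
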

\begin{proof}
Define
$\gamma:V\rightarrow kG$ by
$$
\gamma(v)=\frac{1}{|G|}\sum_{a,b\,\in\, G} \lambda_{ab}(b,\, ^{b^{-1}}v) a\  
$$
and set 
$$\gamma_a(v) : = 
\frac{1}{|G|} \sum_{b\in G} \lambda_{ab}(b,{}^{b^{-1}}v)$$
so that $\gamma=\sum_a \gamma_a a$.
Define a parameter function $\kappa:V\ot V\rightarrow kG$ by
$$
\kappa(u,v)=\gamma(u)\gamma(v)-\gamma(v)\gamma(u)
+\lambda(\gamma(u),v)-\lambda(\gamma(v),u).
$$
Let $f:F \rightarrow \cH_{\lambda,0}$ be the algebra 
homomorphism from the associative $k$-algebra $F$ generated by
$v$ in $V$ and $kG$ to $\cH_{\lambda,0}$ defined by
$$
f(v)= v + \gamma(v) \quad\text{and}\quad f(g) = g
\quad\text{ for all } v \in V \text{ and } g \in G\, .
$$
We claim that Theorem~\ref{thm:PBWconditions} forces the
relations defining $\cH_{0,\kappa}$ to lie
in the kernel of $f$ and thus this map 
extends to a filtered algebra homomorphism
$$
f:\cH_{0,\kappa}\rightarrow \cH_{\lambda,0}\, .
$$

We first argue that Condition~\ref{cocycle12} 
of Theorem~\ref{thm:PBWconditions} 
implies that
$uv-vu-\kappa(u,v)$ is mapped to zero under $f$ for all $u,v$ in $V$.
Indeed, applying $f$ to the commutator of $u$ and $v$ gives
the commutator of $u$ and $v$ added to
the commutator of $\gamma(u)$ and $\gamma(v)$ plus cross terms.
But the commutator of $u$ and $v$ is zero in $\cH_{\lambda,0}$,
and by definition of $\kappa$, 
the commutator of $\gamma(u)$ and $\gamma(v)$
is just
$$\kappa(u,v)-\lambda(\gamma(u),v)+\lambda(\gamma(v),u).$$
We expand the cross terms:
$$
\begin{aligned}
u\gamma(v)-&\gamma(v)u-v\gamma(u)+\gamma(u)v\\
=&
\sum_{g\in G}\Big(\gamma_g(v)(u-\, ^gu)-\gamma_g(u)(v-\, ^gv)\Big) g
- \gamma_g(v)\lambda(g,u)+\gamma_g(u)\lambda(g,v)\\
=&
-\lambda(\gamma(v),u)+\lambda(\gamma(u),v)
+\sum_{g\in G}\Big(\gamma_g(v)(u-\, ^gu)-\gamma_g(u)(v-\, ^gv)\Big) g\, .
\end{aligned}
$$
We rewrite Condition~\ref{cocycle12} 
to show that the above sum over $g$ is zero:
Relabel
($h=ab, g=b,\, ^gu\mapsto u$, etc.) to obtain an
equivalent condition,
$$
0=\ld_{ab}(b,\, ^{b^{-1}} v)(u-\, ^au)
-\ld_{ab}(b,\, ^{b^{-1}} u)(v-\, ^av)\, ,
$$
and then divide by $|G|$ and sum over all $b$ in $G$ to obtain
$$
0=\gamma_a(v)(u-\, ^au)-\gamma_a(u)(v-\, ^av)
$$
for all $a$ in $G$ and $u,v$ in $V$.
Hence the commutator of $u$ and $v$ maps to $\kappa(u,v)$
in $\cH_{\lambda,0}$ under $f$, and
$uv-vu-\kappa(u,v)$ lies in the kernel of $f$.

We now argue
that Condition~\ref{cocycle21} of Theorem~\ref{thm:PBWconditions} 
implies that $cv-\, ^cvc$ lies in the kernel of $f$
for all $c$ in $G$ and $v$ in $V$, i.e.,
$$f(cv-\,^cvc)=
c\gamma(v)-\gamma(\, ^cv)c+\lambda(c,v)
$$ is zero in $\cH_{\lambda,0}$.
We expand $c\gamma(v)$ as a double sum over $a,b$ in $G$, reindex
(with $ca \mapsto a$)
and apply Corollary~\ref{cor:lambda}~(\ref{lg-inverse}).
We likewise expand $\gamma(^cv) c$ also as a double sum 
(but reindex with $ac \mapsto a$ and $b\mapsto cb$)
and obtain
$$\begin{aligned}
c\gamma(v)-\gamma(^cv)c
&=\frac{1}{|G|}\sum_{a,b}\lambda_{ab}(b,\, ^{b^{-1}}v)ca - 
\frac{1}{|G|}\sum_{a,b}\lambda_{ab}(b,\, ^{b^{-1}c}v)ac\\
&=\frac{1}{|G|}\sum_{a,b}\lambda_{c^{-1}ab}(b,\, ^{b^{-1}}v)a -
\frac{1}{|G|}\sum_{a,b}\lambda_{ab}(cb,\, ^{b^{-1}}v)a \\
&=\frac{-1}{|G|}\sum_{a,b}\lambda_{b^{-1}c^{-1}a}(b^{-1}, v)a-
\frac{1}{|G|}\sum_{a,b}\lambda_{ab}(cb,\, ^{b^{-1}}v)a\, . \\
\end{aligned}$$
But Condition~\ref{cocycle21} of Theorem~\ref{thm:PBWconditions} implies that
$$
\lambda_{b^{-1}c^{-1}a}(b^{-1},v)+\lambda_{ab}(cb,\, ^{b^{-1}}v)=\lambda_a(c,v)$$
and hence
$$
  c\gamma(v)-\gamma(^cv)c = 
- \frac{1}{|G|} \sum_{a,b\in G} \lambda_a(c,v) a = -\lambda(c,v),
$$
i.e.,
$f(cv-\, ^cv c)=0$.

Thus 
$
f: \cH_{0,\kappa}\rightarrow \cH_{\lambda,0}\, 
$
is a surjective homomorphism 
of filtered algebras.  An inverse homomorphism is easy
to define ($v\mapsto v-\gamma(v)$ and $g\mapsto g$)
and thus $f$ is a filtered algebra isomorphism
which in turn induces an isomorphism of graded algebras,
$$
\gr\cH_{0,\kappa}\cong \gr\cH_{\lambda,0}\cong S(V)\# G\, .
$$
Thus $\gr\cH_{0,\kappa}$ exhibits the PBW property as well.
\end{proof}

\section{Modular setting}\label{sec:modular} 

We now observe that in positive charactersitic,
the algebras $\cH_{\lambda,0}$
(modeled on Lusztig's graded affine Hecke algebra
but for arbitrary groups) 
do not always arise as algebras $\cH_{0,\kappa}$
(modeled on Drinfeld Hecke algebras), in
contrast to the zero characteristic setting.  
Indeed, the conclusion of Theorem~\ref{LusztigyIsDrinfeldy}
fails for Example~\ref{example1:cyclic}, as we show next.
However, allowing a more general parameter function $\kappa$, it is
possible to obtain the conclusion of Theorem~\ref{LusztigyIsDrinfeldy}
for this example (see Example~\ref{ex:generalkappa} below).

\begin{example}
\em{
Consider the cyclic $2$-group generated by a 
unipotent upper triangular matrix.  Say $G=\langle g \rangle \leq \text{GL}(V)$ 
where $g$ acts as the matrix
$$
   \left(\begin{array}{cc} 1&1\\0&1\end{array}\right)
$$
on the ordered basis $v,w$ of $V=k^2$ (see Example~\ref{example1:cyclic}) 
and $k$ is the finite field of order 
$2$. 

Set $\lambda(g^i,v)=0$ and $\lambda(g^i,w)=ig^{i-1}$ and let $\kappa$ be 
arbitrary (we are particularly interested
in the case that $\kappa$ is zero so that $\cH_{\lambda,\kappa}$ fits the pattern
of Lusztig's graded affine Hecke algebras).  
We argue that the  algebra $\cH_{\lambda,\kappa}$ is not 
isomorphic to  $\cH_{0, \kappa'}$ for {\em any} 
parameter $\kappa'$.

Indeed, suppose $f:\cH_{0,\kappa'}\rightarrow \cH_{\lambda,\kappa}$
 were an isomorphism of filtered algebras for some choice of
parameter $\kappa'$ with $\cH_{0,\kappa'}$
exhibiting the PBW property:
$$
\begin{aligned}
f:\quad
\cH_{0, \kappa'}
\hspace{23ex} 
&\longrightarrow
\ \ \ \cH_{\lambda, \kappa}\ ,\\
\bigquotient{k[v,x]\# G}{
\bigg(
{\large \substack{{gv-vg,}
\\{gw-wg-vg,}\rule{0ex}{1.5ex}
\\{vw-wv-\kappa'}\rule{0ex}{1.5ex}} }
\bigg) 
}\quad
&\longrightarrow
\quad\bigquotient{k[v,w]\# G}
{\bigg(
{\large \substack{ gv-vg,\\ gw-wg-vg-1 \rule{0ex}{1.5ex}
,\\ vw-wv-\kappa \rule{0ex}{1.5ex}}}
\bigg)},
\end{aligned}
$$
where $\kappa,\kappa'$ both lie in $kG=\{0,1,g,1+g\}$.
 Then 
$$
\begin{aligned}
f(g)&=g\, ,\\ 
f(v)&=v\gamma_1 + w\gamma_2 + \gamma_3\, , \text{and}\\
f(w)&=v\gamma_4 +w\gamma_5 + \gamma_6
\end{aligned}
$$
for some $\gamma_i$ in
$kG$.
We apply $f$ to the relation $gv-vg$ in $\cH_{0,\kappa'}$ and 
use the relations in $\cH_{\lambda,\kappa}$, the PBW property of 
$\cH_{\lambda, \kappa}$, 
and the fact that $G$ is abelian to see that $\gamma_2=0$.
Likewise, the relation $gw-wg-vg$ in $\cH_{0,\kappa'}$ implies that $\gamma_1=\gamma_5=\gamma_3 g$.
One may check that   
this gives a contradiction when we apply $f$ to the 
relation $vw-wv-\kappa'$ in $\cH_{0,\kappa'}$. 
Hence, $f$ can not be an algebra isomorphism.}
\end{example}

Next we will show that if one generalizes the possible parameter functions
$\kappa$ to include those of higher degree,
i.e., with image in $kG\oplus (V\ot kG)$, one does obtain
an isomorphism $\cH_{\lambda,\kappa}\cong \cH_{0,\kappa'}$:

\begin{example}\label{ex:generalkappa}
{\em Let $k,G,V$ be as in Example~\ref{example1:cyclic}. 
Let $\kappa'$ be the function defined by
$\kappa'(v,w) = g - vg^{-1}$. 
Let $f: \cH_{\lambda,\kappa}\rightarrow \cH_{0,\kappa'}$
be the algebra homomorphism defined by
$$
   f(g)=g, \ \ \ f(v) = v- g^{-1}, \ \ \ f(w)=w.
$$
We check that $f$ is well-defined by showing that it takes the relations of
$\cH_{\lambda,\kappa}$ to 0:
\begin{eqnarray*}
  f(gw-vg-wg-1) & = & gw-(v-g^{-1})g - wg -1 \\
     &=& gw -vg-wg \ \ \ = \ \ \ 0
\end{eqnarray*}
in $\cH_{0,\kappa'}$. Similarly we find
\begin{eqnarray*}
  f(vw-wv-g) &=& (v-g^{-1})w - w(v-g^{-1}) - g \\
  &=& vw-g^{-1}w -wv + wg^{-1} - g\\
   &=& g-vg^{-1} - g^{-1}w + wg^{-1} - g\\
  &=& g-vg^{-1} - (-v+w)g^{-1} + wg^{-1} -g  \ \ = \ \ 0
\end{eqnarray*}
in $\cH_{0,\kappa'}$. See \cite{doa} for the general theory of such algebras with
more general parameter functions $\kappa'$,
in particular, the second proof of \cite[Theorem 3.1]{doa}.
}
\end{example} 

\medskip

 One might expect there to be examples $\cH_{\lambda,\kappa}$ for
which there is no $\kappa '$ such that 
$\cH_{\lambda,\kappa}\cong \cH_{0,\kappa'}$,
even allowing more general values for $\kappa'$ as in the above example.
In the next section, we begin analysis of the relevant Hochschild cohomology,
which will clarify the distinction between the modular
and nonmodular settings, and will help to
understand better such examples. 
In the remainder of this paper, we continue to focus largely on the modular setting.

\section{Graded deformations and Hochschild cohomology}\label{sec:deformations}

We recall that for any algebra $A$ over a field $k$, 
the   Hochschild cohomology of an $A$-bimodule $M$ in degree $n$ is
$$
  \HH^n (A,M) = \Ext^n_{A^e}(A,M),
$$
where $A^e=A\ot A^{op}$, and $M$ is viewed as an $A^e$-module.
In case $M=A$, we abbreviate $\HH^n(A):= \HH^n(A,A)$. 
This cohomology can be investigated by using
the  bar resolution, that is, the following free resolution
of the $A^e$-module $A$: 
\begin{equation}\label{relative-bar}
 \cdots \stackrel{\delta_3}{\longrightarrow} A\ot A\ot A\ot A
  \stackrel{\delta_2}{\longrightarrow} A\ot A\ot A
  \stackrel{\delta_1}{\longrightarrow} A\ot A \stackrel{\delta_0}{\longrightarrow}
    A\rightarrow 0 , 
\end{equation}
where 
$$
   \delta_n (a_0\ot \cdots\ot a_{n+1}) = \sum_{i=0}^n (-1)^i a_0\ot \cdots
      \ot a_i a_{i+1}\ot \cdots\ot a_{n+1}
$$
for all $n\geq 0$ and $a_0,\ldots,a_{n+1}\in A$.
If $A$ is a graded algebra, its Hochschild cohomology inherits the grading.

A {\bf deformation of $A$ over $k[t]$} is an associative
$k[t]$-algebra on the vector space $A[t]$ whose product $*$ is determined by
\begin{equation}\label{star-formula}
    a_1 * a_2 = a_1a_2 + \mu_1(a_1\ot a_2) t + \mu_2(a_1\ot a_2) t^2 + \cdots
\end{equation}
where $a_1a_2$ is the product of $a_1$ and $a_2$ in $A$ and 
each $\mu_j: A\ot A \rightarrow A$ 
is a $k$-linear map (called the {\em $j$-th multiplication map}) 
extended to be linear over $k[t]$.
(Only finitely many terms may be nonzero 
for each pair $a_1, a_2$ in the above expansion.)
The multiplicative identity $1_A$ of $A$ is assumed to be the multiplicative
identity with respect to $*$.
(Every deformation of $A$
is equivalent to one for which $1_A$ is the multiplicative identity;
see~\cite[p.\ 43]{GerstenhaberSchack83}.)

Under the isomorphism 
$
  \Hom_{k}(A\ot  A, A)\cong \Hom_{A^e}(A\ot A \ot A\ot  A, A),
$
we may identify $\mu_1$ with a  2-cochain
on the  bar resolution (\ref{relative-bar}).
The associativity of the multiplication $*$ implies that
$\mu_1$ is a Hochschild 2-cocycle, that is, 
$$
  a_1\mu_1(a_2\ot a_3) + \mu_1(a_1\ot a_2a_3) =
     \mu_1(a_1a_2\ot a_3) + \mu_1(a_1\ot a_2)a_3
$$
for all $a_1,a_2,a_3\in A$
(equivalently $\delta_3^*(\mu_1)=0$), and that
\begin{equation}\label{obst1}
  \delta_3^*(\mu_2)(a_1\ot a_2\ot a_3) = \mu_1(\mu_1(a_1\ot a_2)\ot a_3)
                           -\mu_1(a_1\ot\mu_1(a_2\ot a_3))
\end{equation}
for all $a_1, a_2, a_3\in A$.
Note that this last equality 
can be written as $\delta_3^*(\mu_2)=\frac{1}{2}[\mu_1,\mu_1]$, where
$[  \cdot  ,  \cdot  ]$ is the Gerstenhaber bracket; 
see \cite{Gerstenhaber} for a definition of  the Gerstenhaber bracket in
arbitrary degrees.
We call a Hochschild 2-cocycle
$\mu$ for $A$ a {\bf noncommutative Poisson structure}
if the Gerstenhaber square bracket $[\mu,\mu]$ vanishes
in cohomology; the first multiplication map
$\mu_1$ of any deformation of $A$ satisfies this condition.

There are further conditions on the $\mu_j$ for higher values of $j$
implied by the associativity of $*$.
We will need just one more: 
\begin{equation}\label{obst2}
\delta_3^*(\mu_3)(a_1\ot a_2\ot a_3) = \mu_1(\mu_2(a_1\ot a_2)\ot a_3)
   -\mu_1(a_1\ot \mu_2(a_2\ot a_3)) 
\end{equation}

\vspace{-.5cm}

$$
\hspace{4.7cm} + \mu_2(\mu_1(a_1\ot a_2)\ot a_3)
  -\mu_2(a_1\ot \mu_1(a_2\ot a_3))
$$
for all $a_1,a_2,a_3\in A$. 
The right side defines the Gerstenhaber bracket $[\mu_1,\mu_2]$
as evaluated on $a_1\ot a_2\ot a_3$, and thus equation~(\ref{obst2}) may also be 
written as $\delta_3^*(\mu_3) = [\mu_1,\mu_2] $. 

Assume now that  $A$ is $\NN$-graded.
Let $t$ be an indeterminate and extend the grading
on $A$ to $A[t]$ by assigning $\deg t = 1$.
A {\bf graded deformation of $A$ over $k[t]$} 
is a deformation of $A$ over $k[t]$ that is graded, i.e., for which 
each $\mu_j:A\otimes A \rightarrow A$ in (\ref{star-formula})  
is a $k$-linear map that is homogeneous of degree $-j$.

We next show explicitly how the algebra $\cH_{\lambda,\kappa}$
arises as a graded deformation
of $S(V)\# G$ under
the assumption that it has the PBW property. 
In fact, every algebra $\cH_{\ld,\kappa}$ having the PBW property is the fiber
(at $t=1$) of some deformation $A_t$ of $S(V)\# G$
over $k[t]$, as stated in the next proposition. 
We give a converse afterwards in Theorem~\ref{thm:whichdeformations}. 
\begin{prop}\label{prop:deformation}
Let $k$ be a field of arbitrary characteristic.
If $\cH_{\ld,\kappa}$ exhibits the PBW property,
then there is a graded  deformation 
$A_t$ of $A=S(V)\# G$ 
over $k[t]$ for which 
$$
\cH_{\ld,\kappa}\cong A_t |_{t=1}
$$
and whose first and second multiplication maps $\mu_1$ and $\mu_2$ satisfy
\begin{equation}\label{k-mu}
\lambda(g,v) = \mu_1(g\ot v) -\mu_1( {}^gv\ot g), \quad\text{and}\quad 
\end{equation}
\begin{equation}\label{l-mu}
\kappa(v,w)= \mu_2(v\ot w) - \mu_2(w\ot v)
\end{equation} 
for all $v,w$ in $V$ and $g$ in $G$.
\end{prop}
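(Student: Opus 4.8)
The plan is to build the deformation $A_t$ explicitly by the standard "Rees-algebra" trick applied to the filtered PBW algebra $\cH_{\ld,\kappa}$, and then to read off the first two multiplication maps. First I would homogenize the defining relations~(\ref{relations}): introduce a central indeterminate $t$ of degree $1$, and in the free algebra $T(V)\#G[t]$ (with $\deg v=1$, $\deg g=0$, $\deg t=1$) consider the ideal generated by the homogenized relations $vw-wv-t\,\kappa(v,w)$ and $gv-{}^gv\,g-t\,\ld(g,v)$ — note $\kappa$ has image in $kG$ (degree $0$) and $\ld$ has image in $kG$, so multiplying by $t$ makes both relations homogeneous of degree $2$ and $1$ respectively. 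Call the quotient $A_t$. Setting $t=0$ recovers $S(V)\#G$, and setting $t=1$ recovers $\cH_{\ld,\kappa}$; quotienting out $(t-\text{scalar})$ in general interpolates between the two.

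The key step is to show $A_t$ is free as a $k[t]$-module on the PBW monomials $\{v_1^{i_1}\cdots v_m^{i_m}g\}$, so that $A_t\cong (S(V)\#G)[t]$ as a graded $k[t]$-module and the multiplication transported to $(S(V)\#G)[t]$ has the form~(\ref{star-formula}) with homogeneous $\mu_j$ of degree $-j$. This freeness is where the PBW hypothesis on $\cH_{\ld,\kappa}$ does the real work: one runs the same Bergman Diamond Lemma argument as in Theorem~\ref{thm:PBWconditions}, now over the ground ring $k[t]$, observing that the five overlap-ambiguity resolutions are exactly the homogenizations (in $t$) of the five conditions of Theorem~\ref{thm:PBWconditions}, which hold by hypothesis; alternatively, one cites Braverman--Gaitsgory or Li, whose homogenization machinery gives precisely that a PBW deformation of a Koszul-type algebra arises as the $t=1$ fiber of a graded deformation over $k[t]$. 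Either route yields that $A_t$ has the required $k[t]$-module structure and hence defines a graded deformation of $S(V)\#G$ with $A_t|_{t=1}\cong\cH_{\ld,\kappa}$.

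Finally I would extract $\mu_1$ and $\mu_2$ from the relations. By construction, in $A_t$ we have $v*w-w*v = t\,\kappa(v,w)$ and $g*v-{}^gv*g = t\,\ld(g,v)$. Expanding via~(\ref{star-formula}): the degree-in-$t$ breakdown of $g*v-{}^gv*g$ gives $\mu_1(g\ot v)-\mu_1({}^gv\ot g)=\ld(g,v)$ at order $t^1$ (and, since $\ld$ has image in $kG$, all higher $\mu_j$ contributions to this particular combination vanish), which is~(\ref{k-mu}); here one uses that $\mu_1(g\ot g')=0$ because multiplication in $kG$ is undeformed, and that ${}^gv\ot g$ is already reduced. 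Similarly $v*w-w*v=t\,\kappa(v,w)$ has no order-$t^1$ term because $\mu_1(v\ot w)-\mu_1(w\ot v)$ must be the degree-$1$ antisymmetric part, which one checks is forced to be zero (the degree-$2$ relation $vw-wv$ homogenizes with a $t$-coefficient, so the $t^1$-part of the commutator is $0$), and the order-$t^2$ term gives $\mu_2(v\ot w)-\mu_2(w\ot v)=\kappa(v,w)$, which is~(\ref{l-mu}).

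The main obstacle is the $k[t]$-freeness of $A_t$: one must be careful that homogenizing over $k[t]$ does not introduce new collapses, i.e., that the Diamond Lemma confluence genuinely survives base change from $k$ to $k[t]$. The cleanest way to handle this is to note that the reduction system and the monomial order used in the proof of Theorem~\ref{thm:PBWconditions} lift verbatim to $k[t]$-coefficients (with $t$ treated as a central scalar smaller than everything), and that the overlap resolutions are the $t$-homogenizations of the already-verified conditions~(1)--(5); thus confluence over $k[t]$ follows from confluence over $k$. Everything else is bookkeeping with the grading.
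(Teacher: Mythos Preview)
Your overall strategy---homogenize the relations, verify the resulting $k[t]$-algebra is free on PBW monomials, then read off $\mu_1,\mu_2$ from the relations---is exactly what the paper does. But there is a concrete error in your homogenization: the relation $vw-wv-t\,\kappa(v,w)$ is \emph{not} homogeneous. Since $\deg v=\deg w=1$ and $\kappa(v,w)\in kG$ has degree $0$, the term $t\,\kappa(v,w)$ has degree $1$, not $2$. The correct homogenization is $vw-wv-t^{2}\,\kappa(v,w)$, and indeed this is what the paper uses (see the relations~(\ref{addt})). Without the $t^{2}$, your $A_t$ is not graded, so it cannot be a \emph{graded} deformation of $S(V)\#G$, which is what the proposition asserts.

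This error propagates into your final paragraph, which becomes internally inconsistent: you write $v*w-w*v=t\,\kappa(v,w)$ and then claim it ``has no order-$t^{1}$ term,'' which is plainly false as stated, and then extract $\kappa$ from the $t^{2}$-coefficient, which would be zero under your relation. With the corrected relation $v*w-w*v=t^{2}\,\kappa(v,w)$, the argument is clean: the $t^{1}$-coefficient gives $\mu_1(v\ot w)-\mu_1(w\ot v)=0$ and the $t^{2}$-coefficient gives $\mu_2(v\ot w)-\mu_2(w\ot v)=\kappa(v,w)$, exactly as in the paper. (The paper is in fact slightly more explicit here: it observes $v_1*v_2=v_1v_2$ and $v_2*v_1=v_1v_2+\kappa(v_2,v_1)t^{2}$ directly from the chosen PBW basis, so $\mu_1$ vanishes on each $v_i\ot v_j$ individually, not merely on the antisymmetrization.) Once you fix the power of $t$, your Diamond-Lemma-over-$k[t]$ justification for freeness is fine and in fact more detailed than the paper's, which simply asserts the vector-space isomorphism $A_t\cong S(V)\#G[t]$ as a consequence of the PBW hypothesis.
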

\begin{proof}
Assume that $\cH_{\lambda,\kappa}$ has the PBW property. 
Let $\cH_{\lambda,\kappa,t}$ be the associative
$k[t]$-algebra generated by $kG$ and a basis of $V$, subject
to the relations
\begin{equation}\label{addt}
\begin{aligned}
   & gv - {}^gv g -\lambda(g,v) t\, , \\
   & vw - wv -\kappa(v,w) t^2 
\end{aligned}
\end{equation}
for all $v,w$ in $V$ and $g$ in $G$.

Since $\cH_{\lambda,\kappa}$ has the PBW property,
$A_t=\cH_{\lambda,\kappa,t}$ is a graded associative algebra (with $\deg t = 1$)
which is isomorphic to $S(V)\# G[t]$ as a vector space
and thus defines a graded deformation of $S(V)\# G$.
We need only verify that its first and second multiplication
maps $\mu_1$ and $\mu_2$ depend on $\kappa$ and $\lambda$ as claimed.

Let $v_1,\ldots,v_m$ be a basis of the vector space $V$, so that
the monomials $v_1^{i_1}\cdots v_m^{i_m}$ 
(with $i_1,\ldots,i_m$ ranging
over nonnegative integers)
form a basis of $S(V)$ and $S(V)\# G$ has basis
consisting of all $v_1^{i_1}\cdots v_m^{i_m} g$ 
(with $g$ ranging over elements of $G$).
We identify $\cH_{\lambda,\kappa,t}$ with $S(V)\# G[t]$ as a
$k[t]$-module by writing all elements of $\cH_{\lambda,\kappa,t}$
in terms of this basis
and then 
translate the multiplication on $\cH_{\lambda,\kappa,t}$ to a multiplication
on $S(V)\# G[t]$ (under this identification) to find $\mu_1$ and $\mu_2$
explicitly.

Denote the product on $\cH_{\lambda,\kappa,t}$ now by $*$
to distinguish from the multiplication in $S(V)\# G$.
We apply the relations in $\cH_{\lambda,\kappa,t}$ to expand the product 
$r* s$ of arbitrary
$r=v_1^{i_1}\cdots v_m^{i_m} g$ and $s=v_1^{j_1}\cdots v_m^{j_m} h$
(for $g,h$ in $G$)
uniquely as a polynomial in $t$ whose coefficients
are expressed in terms of the chosen basis:
$$
  r*s = rs + \mu_1(r\ot s)t + \mu_2(r\ot s) t^2 + \cdots + \mu_{l} (r\ot s)t^l,
$$
where $rs$ denotes the product in $S(V)\# G$. 
We set $r=v_1$ and $s=v_2$ and compare to the case
$r=v_2$ and $r=v_1$. 
Since 
$$
   v_2 * v_1 = v_1v_2 + \kappa(v_2,v_1)t^2 
   \ \ \ \mbox{ and } \ \ \ v_1*v_2 = v_1v_2,
$$
the first multiplication map $\mu_1$ vanishes
on $v_1\otimes v_2$ and on $v_2\otimes v_1$,
and so we have $v_2*v_1 - v_1*v_2 = \kappa(v_2,v_1) t^2$.  Thus
\begin{equation}
\kappa(v_2,v_1)= \mu_2(v_2\ot v_1) - \mu_2(v_1\ot v_2), 
\end{equation}
and the same holds if we exhange $v_1$ and $v_2$
as $\kappa$ is alternating.
Similarly, for any $v$ in $V$,
$   g*v =  {}^g v g + \lambda(g,v) t$
   and $ {}^gv*g = {}^gv g$
so that $g*v- {}^gv*g = \lambda(g,v)t$ and
\begin{equation}
\lambda(g,v) = \mu_1(g\ot v) -\mu_1( {}^gv\ot g) .
\end{equation} 
\end{proof}


We saw in the last proposition
that the algebras 
$\cH_{\lambda,\kappa}$ with the PBW property
correspond 
to graded deformations of
$S(V)\# G$ whose first and second multiplication maps
define the parameters $\lambda$ and $\kappa$ explicitly.
In fact, in the proof of Proposition~\ref{prop:deformation},
we see that the first multiplication maps $\mu_1$
of such deformations all vanish on $V\ot V$.
But precisely {\em which} class of  deformations of $S(V)\# G$ 
comprise the algebras $\cH_{\lambda,\kappa}$? 
We answer this question
by giving a converse to Proposition~\ref{prop:deformation}.

If we leave off the assumption, in the theorem below,  
that $V \ot V$ lies in the kernel of $\mu_1$,
then we obtain
deformations of $S(V)\# G$ generalizing the algebras
$\cH_{\lambda,\kappa}$
investigated in this article. 
For example, some of the other graded deformations of $S(V)\# G$ are
the Drinfeld orbifold algebras~\cite{doa}, 
and there may well be other possibilities 
in positive characteristic, such as analogs of  
Drinfeld orbifold algebras
which in addition deform the group action.

\begin{thm}\label{thm:whichdeformations}
Let $k$ be a field of arbitrary characteristic.
Let $A_t$ be a graded deformation of $A=S(V)\# G$
with multiplication maps $\mu_i$.  
Assume  $ V\ot V$ lies in the kernel of $\mu_1$. 
Define parameters 
$\lambda:kG\otimes V \rightarrow kG$ 
and 
$\kappa:V\otimes V\rightarrow kG$
by
$$
\lambda(g,v) = \mu_1(g\ot v) -\mu_1( {}^gv\ot g)\quad\text{and}\quad 
\kappa(v,w)= \mu_2(v\ot w) - \mu_2(w\ot v) 
$$
for all $v,w$ in $V$ and $g$ in $G$.  
Then the algebra $\cH_{\lambda,\kappa}$ defined by relations (\ref{relations}) 
exhibits the PBW property and
$$A_t|_{t=1}\cong \cH_{\lambda,\kappa}\, .$$
\end{thm}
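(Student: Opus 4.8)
The plan is to reverse the construction in Proposition~\ref{prop:deformation}. Given the graded deformation $A_t$ with multiplication maps $\mu_i$, I would first extract the low-degree part of the associativity constraints to read off the relations that $\cH_{\lambda,\kappa}$ must satisfy. The hypothesis that $V\ot V$ lies in the kernel of $\mu_1$ is what pins $\lambda$ down to degree~$0$ and $\kappa$ to the correct homogeneous degree; one checks that $\mu_1$ restricted to $kG\ot V$ and $\mu_2$ restricted to $V\ot V$ have the right target (namely $kG$, not higher-degree pieces of $S(V)\# G$) by a degree count, using that $\mu_j$ is homogeneous of degree $-j$. This gives well-defined parameters $\lambda$ and $\kappa$ as claimed, with $\kappa$ alternating by construction.

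The heart of the argument is to show that $\lambda$ and $\kappa$ satisfy the five conditions of Theorem~\ref{thm:PBWconditions}; once that is done, $\cH_{\lambda,\kappa}$ automatically has the PBW property, and it remains only to produce the isomorphism $A_t|_{t=1}\cong\cH_{\lambda,\kappa}$. For the five conditions, I would feed the cocycle and obstruction equations~(\ref{obst1}) and~(\ref{obst2}) — equivalently $\delta_3^*(\mu_1)=0$, $\delta_3^*(\mu_2)=\tfrac12[\mu_1,\mu_1]$, and $\delta_3^*(\mu_3)=[\mu_1,\mu_2]$ — evaluated on the appropriate triples of generators: triples $g\ot h\ot v$, then $g\ot u\ot v$, then $u\ot v\ot w$ with $g,h\in G$ and $u,v,w\in V$. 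Grading forces all the higher $\mu_j$ with $j\geq 3$ to vanish on the relevant low-degree tensors except in a controlled way, so that the Hochschild conditions collapse exactly onto conditions~(\ref{cocycle21})–(\ref{mixedbracket}): the $2$-cocycle condition on $g\ot h\ot v$ yields the twisted cocycle identity~(\ref{cocycle21}) for $\lambda$; the $\mu_2$-obstruction on $g\ot u\ot v$ gives the ``first obstruction'' identities~(\ref{firstobstruction12}) and~(\ref{cocycle12}) once one separates the $kG$-component from the $V$-component; the $\mu_2$-obstruction on $u\ot v\ot w$ gives~(\ref{firstobstruction03}); and the $\mu_3$-obstruction on $u\ot v\ot w$ gives the mixed Jacobi identity~(\ref{mixedbracket}). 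The main subtlety will be the bookkeeping that separates graded components and the antisymmetrization needed to pass from the raw $\mu_i$ to the bracket-type parameters $\lambda,\kappa$; this is essentially the same calculation as in the Diamond Lemma proof of Theorem~\ref{thm:PBWconditions}, now run through the homological side.

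With the PBW property of $\cH_{\lambda,\kappa}$ in hand, I would construct the isomorphism as follows. Define $A_t' = \cH_{\lambda,\kappa,t}$ as in the proof of Proposition~\ref{prop:deformation}; by that proposition it is a graded deformation of $S(V)\# G$ with first and second multiplication maps matching $\mu_1$ (on $kG\ot V$, modulo the symmetric part) and $\mu_2$ (on $V\ot V$, modulo the symmetric part). The two graded deformations $A_t$ and $A_t'$ therefore have cohomologous — in fact equal, after the standard normalization — first obstruction cocycles, so by the theory of graded deformations (adjusting $\mu_1$ by a coboundary, i.e., replacing generators $v\mapsto v + (\text{degree-lowering correction})$) there is an equivalence of graded deformations $A_t\cong A_t'$ over $k[t]$; I would spell this out by exhibiting the change-of-generators explicitly rather than invoking an abstract obstruction-theory statement. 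Specializing at $t=1$ gives $A_t|_{t=1}\cong \cH_{\lambda,\kappa,t}|_{t=1} = \cH_{\lambda,\kappa}$, as desired. The delicate point here is that a priori $A_t$ and $A_t'$ might differ by nonzero symmetric parts of $\mu_1,\mu_2$ or by higher $\mu_j$; one argues these discrepancies are all killed by an automorphism of $S(V)\# G[t]$ of the form $\id + (\text{lower order})$, which is where the hypothesis $\mu_1|_{V\ot V}=0$ is used a second time — it guarantees the correction map is a genuine algebra map and not merely a linear one.
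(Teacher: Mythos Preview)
Your approach differs substantially from the paper's, and the second half has a genuine gap.

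The paper does \emph{not} verify the five PBW conditions of Theorem~\ref{thm:PBWconditions} at all. Instead it argues directly: let $F_t$ be the free $k[t]$-algebra on $V$ and $G$ modulo only the group relations, and define $f:F_t\to A_t$ by sending a word to its $*$-product. One checks $f$ is surjective (induction on length), and that the two families of elements $gv-{}^gvg-\lambda(g,v)t$ and $vw-wv-\kappa(v,w)t^2$ lie in $\ker f$; this uses only the definitions of $\lambda,\kappa$, the grading of the $\mu_j$, and the hypothesis $\mu_1|_{V\ot V}=0$. Thus there is a surjection $\cH_{\lambda,\kappa,t}\cong F_t/I_t\twoheadrightarrow A_t$. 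Now a dimension count finishes: in each graded piece $\dim(\cH_{\lambda,\kappa,t})_n\le \dim(S(V)\#G[t])_n=\dim(A_t)_n$, so the surjection is an isomorphism. Specializing at $t=1$ gives $\cH_{\lambda,\kappa}\cong A_t|_{t=1}$, and the PBW property drops out for free since $\gr(A_t|_{t=1})=A$. No obstruction calculus, no Theorem~\ref{thm:PBWconditions}.

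Your step~(i)---deriving the five PBW conditions from the bar-complex identities $\delta^*\mu_1=0$, $[\mu_1,\mu_1]=2\delta^*\mu_2$, $[\mu_1,\mu_2]=\delta^*\mu_3$---is doable, but it is essentially the computation carried out later in the paper as Theorem~\ref{thm:PBWcohomologyconditions}, and it requires evaluating the cocycle condition on several auxiliary triples (e.g.\ ${}^{gh}v\ot g\ot h$ in addition to $g\ot h\ot v$) to pass from the raw $\mu_i$ to their antisymmetrizations $\lambda,\kappa$. So this is a long detour for something the paper gets for free.

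The real problem is your step~(ii). You assert that because $A_t$ and $A_t'=\cH_{\lambda,\kappa,t}$ share the same $\lambda$ and $\kappa$, they are equivalent graded deformations after a change of generators of the form $\mathrm{id}+(\text{lower order})$. But $\lambda$ and $\kappa$ record only the \emph{antisymmetric} parts of $\mu_1$ and $\mu_2$ on restricted inputs; the two deformations may differ in the symmetric parts and in all higher $\mu_j$, and having cohomologous (or even equal) $\mu_1$ does not force equivalence of deformations---one must inductively kill the difference at each level and check the next obstruction vanishes, which you have not done. The paper's dimension-count argument sidesteps this entirely: it needs no comparison of higher multiplication maps, only the observation that the defining relations of $\cH_{\lambda,\kappa,t}$ hold in $A_t$.
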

\begin{proof}
Let $v_1,\ldots,v_m$ be a basis of $V$.
Let $F_t$ be the $k[t]$-algebra generated by $v_1,\ldots,v_m$ 
and all $g$ in $G$
subject only to the relations of $G$. 
Define a $k[t]$-linear map $f: F_t \rightarrow A_t$ by
$$
   f(x_{i_1}\cdots x_{i_n}) = x_{i_1} * \cdots * x_{i_n}
$$
for all words $x_{i_1}\cdots x_{i_n}$ in $F_t$,
where $*$ is the multiplication on $A_t$ 
and each $x_{i_j}$ is an element
of the generating set $\{v_1,\ldots,v_m\}\cup G$.
Then $f$ is an algebra homomorphism 
since the relations of $G$ hold in $A_t$,
as $A_t$ is graded.

We claim that $f$ is surjective. To see this, first note that $f(g)=g$,
$f(v)=v$, and $f(vg) = vg + \mu_1(v\ot g) t$ for all $v\in V$, $g\in G$,
so $vg = f(vg - \mu_1(v\ot g) t)$ in $A_t$. 
Now since $A_t$ is a graded deformation of $S(V)\# G$, it has $k[t]$-basis
given by monomials of the form $v_{i_1}\cdots v_{i_r} g$ 
where $g\in G$. 
We show that $v_{i_1}\cdots v_{i_r} g$ lies in the image of $f$,
by induction on $r$: 
We may assume $v_{i_2}\cdots v_{i_r}g = f(x)$ for some element $x$ of $F_t$. 
Then by writing $x$ as a linear combination of monomials, we find 
\begin{eqnarray*}
    f ( v_{i_1} x ) & = & f(v_{i_1}) * f(x) \\
   & = & v_{i_1} * v_{i_2}\cdots v_{i_r}g \\
   &=& v_{i_1}v_{i_2}\cdots v_{i_r}g + \mu_1(v_{i_1} , v_{i_2}\cdots v_{i_r} g) t +
   \mu_2(v_{i_1}, v_{i_2}\cdots v_{i_r}g) t^2 + \cdots \ .
\end{eqnarray*}
By induction, since each $\mu_j$ is of degree $-j$, each
$\mu_j(v_{i_1} , v_{i_2}\cdots v_{i_r}g)$ is in the image of $f$.
So $v_{i_1}\cdots v_{i_r}g$ is in the image of $f$, and $f$ is surjective.

Next we consider the kernel of $f$. Since each $\mu_j$ has degree $-j$, 
\begin{eqnarray*}
    f(gv) & = & g * v \ \ = \ \  g v + \mu_1(g\ot v) t,\\
   f( {}^gvg) & = & {}^gv * g \ \ = \ \ {}^gv g + \mu_1( {}^gv\ot g)t,
\end{eqnarray*}
and subtracting we obtain 
$$
   f( gv - {}^gv g) = ( \mu_1(g\ot v) - \mu_1( {}^gv \ot g)) t
$$
since in $S(V)\# G$, $gv = {}^gvg$. It follows that 
$$
    gv - {}^gv g - \lambda(g,v) t
=    gv - {}^gv g - 
\big(\mu_1(g\ot v) - \mu_1( {}^gv \ot g)\big) t
$$
lies in the kernel of $f$ for all $g\in G$, $v\in V$. 

Similarly we find that for all $v,w\in V$, since $V\ot V$ is in the kernel
of $\mu_1$ and $\mu_j$ has degree $-j$ for all $j$, 
\begin{eqnarray*}
     f(vw) &=& v * w \ \ = \ \ vw + \mu_2(v\ot w) t^2 ,\\
    f(wv) & = & w * v \ \ = \ \ wv + \mu_2(w\ot v) t^2 ,
\end{eqnarray*}
and subtracting we obtain
$$
   f(vw-wv) = (\mu_2(v\ot w) - \mu_2(w\ot v))t^2
$$
since in $S(V) \# G$, $vw = wv$.
Therefore
$$
   vw-wv - \kappa(v, w) t^2
=   vw-wv - \big(\mu_2(v\ot w) - \mu_2(w\ot v)\big) t^2
$$
lies in the kernel of $f$ for all $v,w\in V$.

Now let $I_t$ be the ideal of $F_t$ generated by all 
$$
\begin{aligned}
   & gv - {}^gvg - \lambda(g,v) t ,\\
   & vw-wv-\kappa (v,w) t^2,
\end{aligned}
$$
for $g$ in $G$ and $v,w$ in  $V$, so that $I_t\subseteq \ker f$.
We claim that $I_t = \ker f$. To see this, first note that by its
definition, $F_t/I_t$ is isomorphic to $\cH_{\lambda, \kappa, t}$,
where $\cH_{\lambda, \kappa,t}$ is the associative 
$k[t]$-algebra generated by $kG$ and a basis of $V$ subject
to the relations~(\ref{addt}).
Therefore we have a surjective homomorphism 
of graded algebras (with $\deg t=1$)
$$
  \cH_{\lambda,\kappa,t}\cong F_t/I_t \relbar\joinrel\twoheadrightarrow 
   F_t/\Ker f \cong A_t . 
$$
However, in each degree, the dimension of $\cH_{\lambda,\kappa,t}$
is at most that of $S(V)\# G[t]$, which is the dimension of $A_t$. 
This forces the dimensions to agree, so that the
surjection is also injective,  
$I_t = \ker f$, and 
$\cH_{\lambda,\kappa,t}\cong A_t $.
Thus, as filtered algebras,
$$\cH_{\lambda,\kappa}=\cH_{\lambda,\kappa,t}\,|_{t=1}
\cong
A_t\, | _{t=1}\, . $$ 
In addition, $\cH_{\lambda,\kappa}$
has the PBW property,
since the associated graded algebra of the fiber of
$A_t$ at $t=1$ is just $A=S(V)\# G$ (see, for example,
\cite[Section~1.4]{BG}).
\end{proof}

\begin{remark}{\em 
Note that Theorem~\ref{thm:whichdeformations}
generalizes a result for Drinfeld Hecke algebras
(also called graded Hecke algebras) in characteristic
zero to a larger class of algebras existing
over fields of arbitrary charcteristic.
In characteristic zero, Drinfeld Hecke algebras correspond
exactly to those deformations $A_t$ of $S(V)\# G$ whose
$i$-th multiplication map has graded degree $-2i$
(up to isomorphism) by~\cite[Proposition~8.14]{SheplerWitherspoon1}
(see also~\cite[Theorem~3.2]{Witherspoon} for a more general statement).
If we replace the deformation parameter $t$ by $t^2$,
we can convert each deformation $A_t$
to a graded deformation (whose odd index multiplication
maps all vanish), obtaining a special
case of Theorem~\ref{thm:whichdeformations} with
$\ld\equiv 0$.  See Corollary~\ref{gradedHeckealgebras}.}
\end{remark}

In the next section, we develop the homological algebra needed to 
make more precise the connections among the maps 
$\lambda, \kappa, \mu_1$, and $\mu_2$
and to express the conditions of Theorem~\ref{thm:PBWconditions} homologically.
We will identify $\lambda$ and $\kappa$  with cochains on a
particular resolution $X_{\DOT}$ of
$S(V)\# G$, and show that $\lambda = \phi^*_2(\mu_1)$ and 
$\kappa=\phi^*_2(\mu_2)$ for a choice of chain 
map $\phi_{\DOT}$ from $X_{\DOT}$ to 
the bar resolution of $S(V)\# G$.

\section{A resolution and chain maps}\label{sec:resolution}
We investigate deformations of $S(V)\# G$ using homological
algebra by implementing a practical resolution  
for handling Hochschild
cohomology over fields of arbitrary characteristic, 
as well as well-behaved chain maps between this resolution
and the bar resolution. 
The resolution $X_{\DOT}$ of 
$A=S(V)\# G$ we describe next is from~\cite{PBW}, 
a generalization of  a resolution  given 
by Guccione, Guccione, and Valqui \cite[\S4.1]{GGV}.
It arises as a tensor product of the Koszul resolution of $S(V)$ with the
bar resolution of $kG$, and we recall only the outcome
of this construction here. 
For details, see \cite{PBW}. 
In the case that the characteristic of $k$ does not divide the order
of $G$, one may simply use the Koszul resolution of $S(V)$ to obtain
homological information about deformations, as in that case the 
Hochschild cohomology of $S(V)\# G$ is isomorphic to 
the $G$-invariants
in the Hochschild cohomology of $S(V)$ with coefficients in $S(V)\# G$.
See, e.g., \cite{SheplerWitherspoon1}. 

The complex $X_{\DOT}$ is  the total complex 
of the double complex $X_{\DOT,\DOT}$,
where
\begin{equation}\label{xij2}
X_{i,j} := A\ot (kG)^{\ot i} \ot \Wedge^j(V)\ot A ,
\end{equation}
and $A^e$ acts by left and right multiplication on the outermost
tensor factors $A$. 
The horizontal differentials are defined by
$$
\begin{aligned}
 d_{i,j}^h(1 \ot  g_1 \ot &\cdots\ot g_i\ot x \ot 1)\\
   =\ & g_1\ot\cdots\ot g_i\ot x\ot 1 
  + \sum_{l=1}^{i-1} (-1)^l \ot g_1\ot \cdots\ot g_l g_{l+1}\ot \cdots\ot g_i\ot x\ot 1\\
   &\hspace{0cm} + (-1)^i \ot g_1\ot \cdots\ot g_{i-1}\ot  {}^{g_i}x\ot g_i
\end{aligned}
$$
for all $g_1,\ldots, g_i$ in $G$ and $x$ in $\Wedge^j(V)$, 
and the vertical differentials are defined by
$$ 
\begin{aligned}
d^v_{i,j}(1\ot g_1\ot &\cdots\ot g_i\ot v_1\wedge\cdots\wedge v_j\ot 1) \\
=\ & (-1)^i
  \sum_{l=1}^j (-1)^l ( {}^{g_1\cdots g_i}v_l\ot g_1\ot \cdots\ot g_i\ot v_1\wedge\cdots\wedge
  \hat{v}_l\wedge \cdots\wedge v_j\ot 1 \\
   & \hspace{0cm} - 1\ot g_1\ot \cdots\ot g_i\ot v_1\wedge\cdots\wedge \hat{v}_l\wedge
   \cdots\wedge v_j\ot v_l ),
\end{aligned}
$$
for all $g_1,\ldots,g_i$ in $G$ and $v_1,\ldots, v_j$ in $V$.
Setting $X_n = \oplus_{i+j=n} X_{i,j}$ for each $n\geq 0$
yields the total complex $X_{\DOT}$: 
\begin{equation}\label{resolution-X}
  \cdots\rightarrow X_2\rightarrow X_1\rightarrow X_0\rightarrow A\rightarrow 0,
\end{equation}
 with differential in degree $n$ given by 
$d_n= \sum_{i+j=n} (d_{i,j}^h + d_{i,j}^v)$,
and in degree 0 by the multiplication map. 
It is proven in \cite{PBW}, in a more general context, that $X_{\DOT}$ is a free resolution
of the $A^e$-module $A$.

We will need to extend some results on chain maps found in 
\cite{PBW}. In the following lemma, we let $A^{\ot (\DOT +2)}$
denote the bar resolution of $A$. We consider elements of $\Wedge^j(V)$
to have degree $j$ and elements of $(kG)^{\ot i}$ to have degree 0. 

\begin{lemma}\label{lem:maps}
Let $k$ be a field of arbitrary characteristic.
There exist chain maps $\phi_{\DOT}: X_{\DOT}\rightarrow A^{\ot (\DOT +2)}$
and $\psi_{\DOT}: A^{\ot (\DOT + 2)}\rightarrow X_{\DOT}$ that are of degree 0
as maps and for which $\psi_n \phi_n$ is the identity map on 
$X_n$ for $n=0,1,2$. 
\end{lemma}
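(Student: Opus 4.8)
The plan is to construct the chain maps $\phi_{\DOT}$ and $\psi_{\DOT}$ explicitly in low degrees and then invoke the comparison theorem for projective resolutions to obtain them in all degrees. Both $X_{\DOT}$ and the bar resolution $A^{\ot(\DOT+2)}$ are free (hence projective) resolutions of $A$ over $A^e$, so the comparison theorem guarantees chain maps in both directions, each lifting the identity on $A$, unique up to homotopy. What the comparison theorem does \emph{not} automatically give is the degree-$0$ property (as maps, i.e.\ preserving the internal grading coming from $\Wedge^{\DOT}(V)$ and the polynomial grading on $A$) and the splitting identity $\psi_n\phi_n = \id_{X_n}$ for $n=0,1,2$; these must be arranged by hand. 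First I would record the degree-$0$ maps: $X_0 = A\ot A$ is literally the degree-$0$ term $A\ot A$ of the bar resolution, so $\phi_0 = \psi_0 = \id$ works.

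Next I would write down $\phi_1$ and $\phi_2$ directly on the summands of $X_1 = X_{1,0}\oplus X_{0,1}$ and $X_2 = X_{2,0}\oplus X_{1,1}\oplus X_{0,2}$. On $X_{0,1} = A\ot V\ot A$ the natural map to $A\ot A\ot A$ sends $1\ot v\ot 1 \mapsto 1\ot v\ot 1$; on $X_{1,0} = A\ot kG\ot A$ the map sends $1\ot g\ot 1\mapsto 1\ot g\ot 1$. For $\phi_2$ one takes $1\ot v\wedge w\ot 1 \mapsto 1\ot v\ot w\ot 1 - 1\ot w\ot v\ot 1$ on $X_{0,2}$, $1\ot g\ot v\ot 1\mapsto 1\ot g\ot v\ot 1 - 1\ot{}^gv\ot g\ot 1$ on $X_{1,1}$, and $1\ot g\ot h\ot 1\mapsto 1\ot g\ot h\ot 1$ on $X_{2,0}$ — these are essentially forced by the requirement $\delta\phi = \phi d$ together with degree considerations, and one checks the chain map condition by a direct computation against the differentials $d^h, d^v$ and $\delta$ recorded above. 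These formulas in fact already appear in \cite{PBW}; the point of the lemma is to also produce $\psi_{\DOT}$ with the stated compatibility. For $\psi_1,\psi_2$ I would similarly give explicit formulas — on a bar-resolution generator $1\ot a\ot b\ot 1$ one decomposes $a$ (and $b$) as elements of $S(V)\# G$ and uses the module structure to push the scalar and group parts to the outer factors, landing in the appropriate $X_{i,j}$; degree $0$ forces the shape of the answer. Then one verifies $\psi_n\phi_n = \id_{X_n}$ for $n=0,1,2$ by evaluating the composite on generators of $X_{i,j}$ for $i+j \le 2$, a finite and mechanical check.

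Having fixed $\phi_n,\psi_n$ for $n\le 2$, I would extend to all $n$ using the comparison theorem, but carefully, to preserve degree $0$. Concretely, I would build $\phi_n$ for $n\ge 3$ by induction: given $\phi_{n-1}$ of degree $0$, the composite $\phi_{n-1}\circ d_n : X_n\to A^{\ot(n+1)}$ is a cycle mapping into $\ker(\delta_{n-1})=\im(\delta_n)$, and since $X_n$ is $A^e$-free one can lift it through the surjection $\delta_n$ onto its image, choosing the lift on free-module generators; because $X_n = \oplus_{i+j=n} A\ot(kG)^{\ot i}\ot\Wedge^j V\ot A$ is generated in a single internal degree $j$ on each summand and the bar differential is degree-preserving, the lift can be taken degree $0$ on each summand. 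The same argument produces $\psi_n$ of degree $0$ for all $n$. Since we are free to choose the lifts \emph{extending} the maps already fixed in degrees $\le 2$, the extension does not disturb the identity $\psi_n\phi_n=\id_{X_n}$ for $n=0,1,2$.

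The main obstacle is not any single hard idea but rather the bookkeeping in the degree-$0$ lifting step: one must check that at each stage the obstruction cocycle $\phi_{n-1}d_n$ really does land in the image of $\delta_n$ \emph{within the correct internal degree}, so that a degree-preserving lift exists — equivalently, that the resolutions $X_{\DOT}$ and $A^{\ot(\DOT+2)}$ are internally graded resolutions and the comparison theorem applies in the graded category. Granting that (which follows from the gradings being respected by all differentials, as is visible in the explicit formulas for $d^h, d^v$ above and for $\delta$), the rest is routine. I would present the explicit low-degree formulas, verify the chain-map and splitting identities by direct computation (citing \cite{PBW} for the $\phi$ side), and then invoke the graded comparison theorem for the inductive extension, remarking that the calculations in degrees $0,1,2$ are what is actually used in the sequel.
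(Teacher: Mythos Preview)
Your proposal is correct and follows essentially the same approach as the paper: explicit formulas for $\phi_n$ on $X_n$ for $n\le 2$ (identical to the paper's), partial explicit formulas for $\psi_n$ chosen so that $\psi_n\phi_n=\id_{X_n}$, and extension via freeness/comparison. The only notable difference is emphasis: the paper is more concrete about $\psi_2$ (it specifies values on elements of the form $1\ot g\ot h\ot 1$, $1\ot g\ot v\ot 1 - 1\ot{}^gv\ot g\ot 1$, $1\ot v\ot g\ot 1$, and $1\ot v\ot w\ot 1 - 1\ot w\ot v\ot 1$, then extends these to a free $A^e$-basis and defines $\psi_2$ arbitrarily on the rest subject to $d_2\psi_2=\psi_1\delta_2$), whereas you gesture at a global formula on all bar generators; conversely, you are more careful than the paper about the graded lifting to higher $n$, which the paper simply imports from \cite{PBW}.
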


As a start, 
recall that there is an embedding of the Koszul resolution into the bar resolution
of $S(V)$. 
The chain maps $\phi_{\DOT}$, $\psi_{\DOT}$ in the lemma
may be taken to be extensions of such maps on $X_{0, \DOT}$ and $S(V)^{\ot (\DOT + 2)}$.
Indeed, 
it is proven in~\cite[Lemma~4.7]{PBW},
for more general Koszul algebras, that there are chain maps 
$\phi_{\DOT}: X_{\DOT}\rightarrow A^{\ot (\DOT + 2)}$
and $\psi_{\DOT} : A^{\ot (\DOT + 2)}\rightarrow X_{\DOT}$ 
of degree 0 (as maps) for which $\psi_n\phi_n$ is the identity
on the subspace $X_{0,n}$ of $X_n$ for each $n\geq 0$.
See~\cite{SW-ANT}, for example, for explicit choices of chain maps
involving the Koszul and bar resolutions of $S(V)$.

\begin{proof}
We  show that  the maps may be chosen so that
$\psi_0\phi_0$, $\psi_1\phi_1$, and $\psi_2\phi_2$ are identity maps
on all of $X_0$, $X_1$, and $X_2$, respectively.
In degree 0, $\psi_0$ and $\phi_0$ are defined to be identity
maps from $A\ot A$ to itself.
It is noted in the proof of \cite[Lemma 4.7]{PBW} that one may choose
$\phi_1$ and $\psi_1$
to be defined by 
$$
\begin{aligned}
\phi_1(1\ot g\ot 1) &= 1\ot g\ot 1,
&\quad
\phi_1(1\ot v\ot 1) &= 1\ot v\ot 1,
\\
\psi_1(1\ot g\ot 1) &= 1\ot g\ot 1,
&\quad
\psi_1(1\ot v\ot 1) &= 1\ot v\ot 1, 
\\
\hphantom{xxxxx}
\text{and}\quad\quad\quad\, \ \ \
 & \psi_1 (1 \ot vg\ot 1)\ =&  1 \ot v\ot g + v \ot g \ot  1  \, , &
\end{aligned}
$$ 
for all $g$ in $G$ and $v$ in $V$.
Extend $\psi_1$ to $A^{\ot 3}$ by extending these elements to a free 
$A^e$-basis and defining $\psi_1$ on the remaining basis elements
arbitrarily subject to the condition $d_1\psi_1 = \psi_0 \delta_1$. 
It follows that $\psi_1\phi_1$ is the identity map on $X_1$. 

We now proceed to degree 2.
We define $\phi_2:X_2\rightarrow A^{\ot 4}$
by  setting 
\begin{equation}\label{phi2} 
\begin{aligned}
  \phi_2(1\ot g\ot h\ot 1)&=1\ot g\ot h\ot 1\, ,\\
\phi_2(1\ot g\ot v\ot 1)&= 1\ot g\ot v\ot 1 - 1\ot {}^gv\ot g\ot 1 \, , \quad\text{and}\\
  \phi_2(1\ot v\wedge w \ot 1) &= 1\ot v\ot w\ot 1 - 1\ot w\ot v\ot 1
\end{aligned}
\end{equation}
for all $g,h$ in $G$ and $v,w$ in $V$.
One may check directly that $\phi_2$ continues the chain map $\phi_{\DOT}$ to degree 2.
We construct a map $\psi_2:A^{\ot 4}\rightarrow X_2$
by first setting
\begin{equation}\label{psi2gv}
\begin{aligned}
  \psi_2(1\ot g\ot h\ot 1)&=1\ot g\ot h\ot 1  \text{ in } X_{2,0}\, ,\\
  \psi_2(1\ot g\ot v\ot 1 - 1\ot {}^gv\ot g\ot 1)&=
1\ot g\ot v\ot 1  \text{ in } X_{1,1}     \, , \\
  \psi_2(1\ot v\ot g\ot 1)& = 0 \, ,
\quad\text{and} \\
  \psi_2(1\ot v\ot w\ot 1-1\ot w\ot v\ot 1)&
= 1\ot v\wedge w\ot 1 \text{ in } X_{0,2} 
\end{aligned}
\end{equation}
for all $v,w$ in $V$ and $g$ in $G$.
(In fact, we set $\psi_2(1\ot v_i\ot v_j\ot 1)$ equal to
$1\ot v_i\wedge v_j\ot 1$ if $i<j$ and zero if $j<i$
given a fixed basis $v_1, \ldots, v_m$ of $V$.)
The elements of the form $1\ot g\ot v\ot 1 - 1\ot {}^gv\ot g\ot 1$
and $1\ot v\ot g\ot 1$, where
$v$ runs over a basis of $V$ and $g$ runs over all elements of $G$,
are linearly independent, and the same is true if we take their union with
all $1\ot v\ot w\ot 1$ where $v,w$ run over a basis of $V$ and all
$1\ot g\ot h\ot 1$ where $g,h$ run over the elements of $G$.  
We extend to a free $A^e$-basis of $A^{\ot 4}$ and define
$\psi_2$ arbitrarily on the remaining basis elements
subject to the  condition $d_2\psi_2=\psi_1\delta_2$. 
By construction, $\psi_2\phi_2$ is the identity
map on  $X_2$.
\end{proof}

\begin{remark}{\em 
An explicit choice of chain map $\phi_3$
is often helpful in checking PBW conditions homologically.
We may define $\phi_3$ on a free $A^e$-basis
of $X_{3}$: Let $g,h,\ell\in G$, $v,w, v_1,v_2,v_3\in V$, and set
\begin{equation}
\label{phi3gvw}
\begin{aligned}
\phi_3(1\ot g\ot h\ot \ell\ot 1) = \ & \phantom{ , , , }1\ot g\ot h\ot\ell\ot 1\, ,\\
\phi_3(1\ot g\ot h\ot v\ot 1) = \ & \phantom{ , , , }1\ot g\ot h\ot v\ot 1 -1\ot g\ot {}^hv\ot h\ot 1\\ 
      &  + 1\ot {}^{gh}v\ot g\ot h\ot 1\, ,\\
  \phi_3(1\ot g\ot v\wedge w\ot 1)  = \ 
     & \phantom{ , , ,  }1\ot g\ot v\ot w\ot 1\ \, - \ 1\ot g\ot w\ot v\ot 1 \\  
 & + 1\ot  {}^gv\ot  {}^gw\ot g\ot 1 -   1\ot  {}^gw\ot  {}^gv \ot g\ot 1 \\
   & -1\ot  {}^gv \ot g \ot w \ot 1\ +\,  1\ot  {}^gw\ot g\ot v \ot 1\, , \\
\phi_3(1\ot v_1\wedge v_2\wedge v_3 \ot 1) =  & \phantom{ , , , }\sum_{\sigma\in \Sym_3}
     \sgn \sigma \ot v_{\sigma(1)}\ot v_{\sigma(2)}\ot v_{\sigma(3)}\ot 1\, .
\end{aligned}
\end{equation}
It can be checked that $\delta_3\phi_3 = \phi_2  d_3$ on $X_3$. 
}
\end{remark}

In the remainder of this article, we will always choose to work with chain maps
$\phi_{\DOT}$, $\psi_{\DOT}$ satisfying the above conditions and formulas.

\section{Homological conditions arising from the PBW property}
\label{sec:homologicalconditions}
We now reverse the flow of information
and use homological theory to explain which quotient algebras
$\cH_{\ld,\kappa}$, defined by relations (\ref{relations}), 
exhibit the PBW property.
We show that the PBW conditions of Theorem~\ref{thm:PBWconditions}  
have a natural and elegant
homological interpretation.

We saw in Section~\ref{sec:deformations} that
if the algebra $\cH_{\ld,\kappa}$ has the PBW property, 
then it corresponds to a graded deformation of $S(V)\# G$
with first multiplication maps $\mu_1$ and $\mu_2$
related to the parameters $\ld$ and $\kappa$
(see Proposition~\ref{prop:deformation}).
Since the multiplication maps $\mu_1$ and $\mu_2$ 
lift to a deformation, say with third multiplication map $\mu_3$,
they satisfy
the following homological conditions:
\begin{equation}\label{barhomologicalconditions}
\begin{aligned}
1)\ \ \
\delta^*(\mu_1)&=0,
\hphantom{xxxxxxxxxxxxxxxxxxxxxxxxxxxxxxxxx}\\
2)\ \
[\mu_1, \mu_1]&=2\delta^*(\mu_2),\ \ \text{and}
\hphantom{xxxxxxxxxxxxxxxxxxxxxxxxxxxxxxx}\\
3)\ \ 
[\mu_1,\mu_2]&= \delta^* \mu_3
\hphantom{xxxxxxxxxxxxxxxxxxxxxxxxxxxxxx}
\end{aligned}
\end{equation}
as cochains (on the bar complex) 
for the Hochschild cohomology of $S(V)\# G$, where $[ \, , \, ]$
denotes the Gerstenhaber bracket 
(see Section~\ref{sec:deformations}).
These general homological conditions 
are necessary, but not sufficient,
for some random maps to arise as the multiplication maps 
of a deformation of an algebra;
the conditions merely
guarantee that the initial obstructions to
constructing a deformation from some candidate multiplication maps
vanish.

We seek homological conditions which completely characterize
the algebras $\cH_{\lambda,\kappa}$  with the PBW property.
If we begin with {\em arbitrary parameters} $\kappa$
and $\lambda$, can we find homological conditions analgous
to those above which
are both necessary {\em and sufficient} 
to imply the parameters define a deformation of $S(V)\# G$?
We begin with a lemma that shows how to extend parameters
$\ld$ and $\kappa$ to the $A^e$-resolution $X_{\DOT}$ of $A=S(V)\# G$
defined in Section~\ref{sec:resolution}.
\begin{lemma}\label{extendparameters}
${}_{}$
\begin{itemize}
\item[(1)] Any $k$-linear parameter function
$\ld: kG \ot V\rightarrow kG$
extends uniquely to a graded 2-cochain on $X_{\DOT}$
of degree $-1$ that vanishes on $X_{0,2}$.\\
\item[(2)] Any alternating $k$-linear parameter function
$\kappa: V\otimes V \rightarrow kG$
extends uniquely to graded 2-cochain on $X_{\DOT}$
of degree $-2$.
\end{itemize}
\end{lemma}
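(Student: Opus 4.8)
The statement asserts that the combinatorial parameter functions $\ld$ and $\kappa$ extend \emph{uniquely} to $2$-cochains $\Hom_{A^e}(X_2, A)$ with prescribed graded degree and vanishing properties. The plan is to unwind what a $2$-cochain on $X_{\DOT}$ is: since $X_2 = X_{2,0}\oplus X_{1,1}\oplus X_{0,2}$ with $X_{2,0}=A\ot (kG)^{\ot 2}\ot A$, $X_{1,1}=A\ot kG\ot \Wedge^1(V)\ot A$, and $X_{0,2}=A\ot \Wedge^2(V)\ot A$, a $2$-cochain is by freeness of these $A^e$-modules determined by its values on the $A^e$-generators $1\ot g\ot h\ot 1$, $1\ot g\ot v\ot 1$, and $1\ot v\wedge w\ot 1$. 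First I would record the degrees: elements of $(kG)^{\ot i}\ot\Wedge^j(V)$ have degree $j$, so a cochain of degree $-1$ must send the degree-$1$ generators $1\ot g\ot v\ot 1$ into $A$ in degree $0$, i.e.\ into $kG\subset S(V)\# G$, and must kill the degree-$0$ summand $X_{2,0}$ and (by hypothesis) the degree-$2$ summand $X_{0,2}$; dually a cochain of degree $-2$ must send the degree-$2$ generators $1\ot v\wedge w\ot 1$ into $kG$ and must kill $X_{2,0}$ and $X_{1,1}$ for degree reasons (there is nothing in $A$ of negative degree).

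With that in hand the construction is forced: for part (1) define $\db_\ld$ on the free $A^e$-basis by $\db_\ld(1\ot g\ot h\ot 1)=0$, $\db_\ld(1\ot g\ot v\ot 1)=\ld(g,v)$, $\db_\ld(1\ot v\wedge w\ot 1)=0$, and extend $A^e$-linearly; for part (2) define $\db_\kappa(1\ot g\ot h\ot 1)=0$, $\db_\kappa(1\ot g\ot v\ot 1)=0$, $\db_\kappa(1\ot v\wedge w\ot 1)=\kappa(v,w)$, and extend. The key point making this well defined is that $X_{0,2}=A\ot\Wedge^2(V)\ot A$ really is free on the exterior generators, so the alternating property of $\kappa$ is exactly what is needed for $v\wedge w\mapsto \kappa(v,w)$ to be consistent (antisymmetry on both sides matches). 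Uniqueness is then immediate: any two extensions of degree $-1$ vanishing on $X_{0,2}$ agree on all three families of $A^e$-generators of $X_2$ — on $X_{2,0}$ and $X_{0,2}$ because both vanish there (degree, resp.\ hypothesis), and on $X_{1,1}$ because both equal $\ld$ — hence agree on $X_2$; similarly in the degree $-2$ case, where vanishing on $X_{2,0}$ and $X_{1,1}$ is automatic by degree.

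The only genuine content beyond bookkeeping is checking that $X_{i,j}=A\ot(kG)^{\ot i}\ot\Wedge^j(V)\ot A$ is indeed free as an $A^e$-module on the evident basis $\{1\ot g_1\ot\cdots\ot g_i\ot v_{j_1}\wedge\cdots\wedge v_{j_j}\ot 1\}$ (ranging over $g_\ell\in G$ and $j_1<\cdots<j_j$ in a fixed basis of $V$), so that a cochain on $X_{\DOT}$ may be specified freely on these generators — this is recorded in the construction of $X_{\DOT}$ from \cite{PBW} and is the fact I would cite. I would also remark, as a sanity check, that the formulas above are precisely the restrictions to $X_2$ of the maps $\phi_2^*(\mu_1)$ and $\phi_2^*(\mu_2)$ one gets from Proposition~\ref{prop:deformation} together with the explicit $\phi_2$ of \eqref{phi2}: applying $\phi_2$ to $1\ot g\ot v\ot 1$ gives $1\ot g\ot v\ot 1 - 1\ot{}^gv\ot g\ot 1$, and $\mu_1$ evaluated on that is $\mu_1(g\ot v)-\mu_1({}^gv\ot g)=\ld(g,v)$, while $\phi_2(1\ot v\wedge w\ot 1)=1\ot v\ot w\ot 1-1\ot w\ot v\ot 1$ feeds $\mu_2$ to give $\kappa(v,w)$ — consistent with the announced identities $\ld=\phi_2^*(\mu_1)$, $\kappa=\phi_2^*(\mu_2)$ foreshadowed at the end of Section~\ref{sec:deformations}. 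There is no real obstacle here; the lemma is essentially a definition whose only subtlety is the freeness of the modules $X_{i,j}$ and the matching of $\kappa$'s alternating property with the exterior algebra generators.
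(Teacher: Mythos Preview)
Your proposal is correct and follows essentially the same approach as the paper: decompose $X_2=X_{2,0}\oplus X_{1,1}\oplus X_{0,2}$, use degree considerations (together with the hypothesis on $X_{0,2}$ in part~(1)) to force vanishing on the irrelevant summands, extend the parameter to an $A^e$-map on the remaining summand via freeness, and deduce uniqueness from the same vanishing. Your write-up is somewhat more explicit about the freeness of the $X_{i,j}$ and adds the consistency check with $\phi_2^*(\mu_1)$ and $\phi_2^*(\mu_2)$, but there is no substantive difference in strategy.
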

\begin{proof}
The second degree term of the complex $X_{\DOT}$
is $X_2=X_{2,0}\oplus X_{1,1}\oplus X_{0,2}$.
The $k$-linear map $\ld: kG\ot V \rightarrow kG$
extends to an $A$-bimodule map $\ld'$ on the summand $X_{1,1}=A\ot kG\ot V\ot A$ of $X_2$,
$$\ld': A\ot kG\ot V \ot A \rightarrow kG\, ,$$
which may be extended
to a map on the rest of $X_2$ by setting it to zero on $X_{2,0}$ and $X_{0,2}$.
Thus $\ld'$ is a cochain on $X_{\DOT}$ of 
degree $-1$ that extends $\ld$:
$$
\ld(g,v) = \ld'(1\ot g \ot v \ot 1)$$
for all $g$ in $G$ and $v$ in $V$.
Any other degree $-1$ cochain on $X_2$ that extends $\ld$ in this
way and also vanishes on $X_{0,2}$
would define the same cochain
(since it would automatically vanish on $X_{2,0}$ given its degree).

Likewise, the alternating $k$-linear map $\kappa: V\ot V \rightarrow kG$
naturally defines an $A$-bimodule map $\kappa'$ on the summand 
$\displaystyle{X_{0,2}=A\ot V\wedge V\ot A}$ of $X_2$,
$$\kappa': A\ot V\wedge V \ot A \rightarrow kG\, , $$
which may be extended
to a map on the rest of $X_2$ by setting it to zero on $X_{2,0}$ and $X_{1,1}$.
Thus $\kappa'$ is a cochain on $X_{\DOT}$ of 
degree $-2$ that extends $\kappa$:
$$
\kappa(v,w) = \kappa'(1\ot v \ot w \ot 1)$$
for all $v,w$ in $V$.
Any other degree $-2$ cochain on $X_2$ that extends $\kappa$ in this
way would define the same cochain
since it must vanish on $X_{2,0}$ and $X_{1,1}$ given its degree.
\end{proof}

We now express the PBW conditions of Theorem~\ref{thm:PBWconditions} as 
homological conditions on the parameters $\lambda,\kappa$. In fact,
the conditions of the theorem are concisely articulated 
using the graded Lie bracket on Hochschild cohomology.
We show that if $\cH_{\lambda,\kappa}$ has the PBW property, then
$\lambda$ defines a noncommutative Poisson structure
on $S(V)\# G$ and the Gerstenhaber bracket of 
$\lambda$ and $\kappa$ is zero in cohomology.  The converse also  
holds, and we will  make this statement  precise at the cochain level.
The bracket in the following theorem is the Gerstenhaber bracket 
(graded Lie bracket) on
Hochschild cohomology induced on the complex $X_{\DOT}$ from the bar complex
where it is defined (via the chain maps $\phi_{\DOT}$, $\psi_{\DOT}$):

\begin{thm}\label{thm:PBWcohomologyconditions}
Let $k$ be a field of arbitrary characteristic.
A quotient algebra $\cH_{\ld,\kappa}$ defined by relations (\ref{relations}) 
exhibits the PBW property if and only if
\begin{itemize}
\item
$d^*(\lambda)=0$,
\item
$[\lambda,\lambda]=2d^*(\kappa)$, and
\item
$[\lambda,\kappa]= 0 $
\end{itemize}
as Hochschild cochains, 
where $\lambda$ and $\kappa$ are viewed as 
cochains on the resolution $X_{\DOT}$ via Lemma~\ref{extendparameters}.
\end{thm}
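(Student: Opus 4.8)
The plan is to translate each of the five combinatorial conditions of Theorem~\ref{thm:PBWconditions} into one of the three homological equations, working entirely at the cochain level on the resolution $X_{\DOT}$. The first step is to compute the coboundaries $d^*(\lambda)$ and $d^*(\kappa)$ directly. Since $\lambda$ is a degree $-1$ cochain on $X_{\DOT}$ vanishing on $X_{0,2}$ (and automatically on $X_{2,0}$), $d^*(\lambda)$ is a degree $-1$ cochain on $X_3 = X_{3,0}\oplus X_{2,1}\oplus X_{1,2}\oplus X_{0,3}$; by degree reasons it can only be nonzero on $X_{2,1}$ and $X_{1,2}$. Evaluating $d^*(\lambda) = \lambda \circ d_3$ on the free $A^e$-generators $1\ot g\ot h\ot v\ot 1$ of $X_{2,1}$ should reproduce exactly Condition~\ref{cocycle21}, and on the generators $1\ot g\ot v\wedge w\ot 1$ of $X_{1,2}$ it should reproduce Condition~\ref{cocycle12}. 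So the single equation $d^*(\lambda)=0$ is equivalent to Conditions (1) and (3) together. Similarly $d^*(\kappa)$, a degree $-2$ cochain on $X_3$, lives only on $X_{1,2}$ and $X_{0,3}$; evaluating $\kappa\circ d_3$ on $1\ot g\ot v\wedge w\ot 1$ and on $1\ot v_1\wedge v_2\wedge v_3\ot 1$ should yield (the relevant halves of) Conditions \ref{firstobstruction12} and \ref{firstobstruction03}.

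Next I would compute the Gerstenhaber brackets $[\lambda,\lambda]$ and $[\lambda,\kappa]$ on $X_{\DOT}$. The bracket is defined by transporting to the bar resolution via $\phi_{\DOT}$, bracketing there, and pulling back via $\psi_{\DOT}$; concretely, $[\lambda,\kappa] = \psi_3^*\bigl([\phi_2^*\lambda,\,\phi_2^*\kappa]\bigr)$, and similarly for the square bracket. Using the explicit formulas for $\phi_2, \phi_3, \psi_2$ from Lemma~\ref{lem:maps} and the displayed formulas \eqref{phi2}, \eqref{psi2gv}, \eqref{phi3gvw}, one evaluates these brackets on the free $A^e$-generators of $X_3$. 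I expect $[\lambda,\lambda]$ to be supported on $X_{1,2}$ (and perhaps $X_{2,1}$), and its value on $1\ot g\ot v\wedge w\ot 1$, combined with the $X_{1,2}$-part of $2d^*(\kappa)$ already computed, to recombine into precisely Condition~\ref{firstobstruction12}. The bracket $[\lambda,\kappa]$ should be supported on $X_{0,3}$, and its value on $1\ot v_1\wedge v_2\wedge v_3\ot 1$ should be exactly the left-hand side of Condition~\ref{mixedbracket}. Thus the three homological equations, read off on each graded piece of $X_3$, are collectively equivalent to the five conditions of Theorem~\ref{thm:PBWconditions}, which by that theorem is equivalent to the PBW property.

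The main obstacle will be the bracket computations: the Gerstenhaber bracket on the bar complex of $S(V)\# G$ involves signed sums of compositions of the $2$-cochains $\phi_2^*\lambda$ and $\phi_2^*\kappa$ inserted into each other, and because $\phi_2$ sends a single generator to a difference of two bar-chains (e.g. $1\ot g\ot v\ot 1 \mapsto 1\ot g\ot v\ot 1 - 1\ot {}^gv\ot g\ot 1$), each term proliferates. One must also be careful that $\phi_2^*\lambda$ and $\phi_2^*\kappa$ take values in $kG\subset A$, so that the compositions landing inside them only see the group-algebra part, and track the action of $G$ on $V$ that appears throughout the differentials of $X_{\DOT}$. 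I would organize the bookkeeping by fixing a basis $v_1,\dots,v_m$ of $V$, writing $\lambda(h,v) = \sum_g \lambda_g(h,v)g$ and $\kappa(u,v)=\sum_g\kappa_g(u,v)g$, and checking each graded component of $X_3$ separately; the sign conventions (the $(-1)^i$ in $d^v_{i,j}$ and the Koszul signs in the bracket) are where errors are most likely, so I would cross-check one component against the direct Diamond Lemma computation underlying Theorem~\ref{thm:PBWconditions}. Once the dictionary between the graded pieces of $X_3$ and the five conditions is established, the equivalence with the PBW property is immediate from Theorem~\ref{thm:PBWconditions}.
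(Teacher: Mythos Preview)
Your strategy is exactly the paper's: evaluate $d^*(\lambda)$ on $X_{2,1}$ and $X_{1,2}$ to recover Conditions~(\ref{cocycle21}) and~(\ref{cocycle12}); evaluate $[\lambda,\lambda]-2d^*(\kappa)$ on $X_{1,2}$ and $X_{0,3}$ to recover Conditions~(\ref{firstobstruction12}) and~(\ref{firstobstruction03}); and evaluate $[\lambda,\kappa]$ on $X_{0,3}$ to recover Condition~(\ref{mixedbracket}). Two points need correcting before you carry it out.

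First, your transport formula for the bracket has $\phi$ and $\psi$ reversed. Since $\phi_{\DOT}: X_{\DOT}\to A^{\ot(\DOT+2)}$ and $\psi_{\DOT}: A^{\ot(\DOT+2)}\to X_{\DOT}$, a cochain $\lambda\in\Hom_{A^e}(X_2,A)$ is carried to the bar complex by $\psi_2^*(\lambda)=\lambda\circ\psi_2$, and the result is pulled back to $X_3$ by $\phi_3^*$. Thus the induced bracket is $[\lambda,\kappa]=\phi_3^*\bigl([\psi_2^*\lambda,\psi_2^*\kappa]\bigr)$, the opposite of what you wrote. This matters in practice: the computation sets $\mu_1=\psi_2^*\lambda$, $\mu_2=\psi_2^*\kappa$, uses the explicit values of $\psi_2$ in~(\ref{psi2gv}) to evaluate $\mu_1,\mu_2$ on bar chains, and then feeds the images $\phi_3(1\ot g\ot v\wedge w\ot 1)$ and $\phi_3(1\ot v_1\wedge v_2\wedge v_3\ot 1)$ from~(\ref{phi3gvw}) into the bar bracket. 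Second, $[\lambda,\lambda]$ has graded degree $-2$, so its possible support on $X_3$ is $X_{1,2}\oplus X_{0,3}$, not $X_{2,1}$; on $X_{0,3}$ a further degree count shows $[\mu_1,\mu_1]\circ\phi_3$ vanishes, so the equation $[\lambda,\lambda]=2d^*(\kappa)$ there reduces to $d^*(\kappa)|_{X_{0,3}}=0$, which is Condition~(\ref{firstobstruction03}) as you anticipated. With these two fixes your outline goes through and coincides with the paper's proof.
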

\begin{proof}
We choose chain maps $\phi_{\DOT}$, $\psi_{\DOT}$ 
satisfying the conditions
of Lemma~\ref{lem:maps} and equations (\ref{phi2}) and (\ref{psi2gv}). 
We convert $\lambda$ and $\kappa$ to cochains on the bar
resolution of $A=S(V)\# G$
by simply applying these  chain maps.
Explicitly, we set 
$$\mu_1=\psi_2^*(\lambda)\quad\text{and}\quad\mu_2=\psi_2^*(\kappa). $$
Since $\psi_2\phi_2$ is the identity map by Lemma~\ref{lem:maps}, 
we find that $\lambda=\phi_2^*(\mu_1)$
and $\kappa=\phi_2^*(\mu_2)$, so that by formula (\ref{phi2}),
$$
\begin{aligned}
\lambda(g,v) &= \mu_1(g\ot v) -\mu_1( {}^gv\ot g)  \, ,\\ 
\kappa(v,w)&= \mu_2(v\ot w) - \mu_2(w\ot v)\\
\end{aligned} 
$$
for all $v,w$ in $V$ and $g$ in $G$.

The Gerstenhaber bracket on $X_{\DOT}$ is 
lifted from the bar complex using the chain maps $\psi$ and $\phi$
as well:
$$
[\lambda,\lambda]=\phi^*[\psi^*(\lambda),\psi^*(\lambda)]
\text{ and }
[\lambda,\kappa]=\phi^*[\psi^*(\lambda),\psi^*(\kappa)]\ .
$$

We show next that the conditions in the theorem
are equivalent to the PBW conditions of Theorem~\ref{thm:PBWconditions}.

\subsection{The Hochschild cocycle
condition on $\lambda$}
We first show that $\lambda$ is a cocycle
if and only if Conditions~\ref{cocycle21} and \ref{cocycle12} 
of Theorem~\ref{thm:PBWconditions} hold. 
The cochain $\lambda$ on the complex $X_{\DOT}$ 
is only nonzero on the summand $X_{1,1}$ of $X_2$.
We therefore
check the values of $\lambda$ on the image of the differential on 
$X_{2,1}\oplus X_{1,2}$. 

Note that $X_{2,1}$ is spanned by all
$1\ot g\ot h\ot v\ot 1$ where $g,h$ range over all pairs of
group elements in $G$ and $v$ ranges over vectors in $V$.
Since $\lambda$ is 0 on $X_{2,0}$,
$$
\begin{aligned}
   d^*(\lambda)&(1\ot g\ot h\ot v\ot 1) \\
  &= \lambda(g\ot h\ot v\ot 1 - 1\ot gh\ot v\ot 1 + 1\ot g\ot {}^hv\ot h)\\
   &= g\lambda(h,v) - \lambda(gh,v) + \lambda(g, {}^hv) h .
\end{aligned}
$$
Thus $d^*(\lambda)$ vanishes on $X_{2,1}$ exactly when
Condition~\ref{cocycle21}  of Theorem~\ref{thm:PBWconditions}
holds. 

Next we note that $X_{1,2}$ is spanned by all $1\ot g\ot v\wedge w\ot 1$,
where $g$ ranges over all group elements and $v,w$ range over vectors 
in $V$.
Then, since $\lambda$ is 0 on $X_{0,2}$, 
$$
\begin{aligned}
d^*(\lambda)&(1\ot  g\ot v\wedge w\ot 1)  \\
  &=- \lambda( {}^gv\ot g\ot w\ot 1 - 1\ot g\ot w\ot v 
     - {}^g w \ot g\ot v \ot 1 + 1\ot g\ot v \ot w)\\
   &=  -\, {}^gv \lambda(g,w) + \lambda(g,w)v + {}^gw\lambda(g,v)
   -\lambda(g,v) w .
\end{aligned}
$$
Expand $\lambda(g,v)$ as $\sum_{h\in G} \lambda_h(g,v)h$ and rearrange
terms to see that $d^*(\lambda)$ vanishes on $X_{1,2}$
exactly when Condition~\ref{cocycle12} of
Theorem~\ref{thm:PBWconditions} holds.
Thus $\lambda$ is a cocycle exactly when both
Conditions~\ref{cocycle21} and \ref{cocycle12} hold.

\subsection{The first obstruction
to integrating}
We next show that the condition
$$[\lambda,\lambda]=2 d^* \kappa$$ 
corresponds
to (\ref{obst1}),
the condition that the first obstruction to lifting 
(or integrating) an infinitesimal deformation
must vanish.
Note that this condition
gives an equality of maps on $X_3$.
But both $[\lambda, \lambda]$
and $d^*\kappa$ vanish on $X_{3,0}$ and $X_{2,1}$
by degree reasons, thus we need only check the condition
on $X_{1,2}$ and $X_{0,3}$.

We first calculate $2 d^*(\kappa)$ on $X_{1,2}$.
Since $\kappa$ is nonzero only on $X_{0,2}$, a direct calculation
shows that 
$$
    d^*(\kappa)( 1\ot g\ot v\wedge w\ot 1) =
    g \kappa(v,w) - \kappa ( {}^gv, {}^gw) g 
$$
for all $v,w$ in $V$ and $g$ in $G$.
Next we calculate $[\lambda,\lambda]=\phi_3^*[\mu_1,\mu_1]$ on such input. We
apply~(\ref{phi3gvw}): 
$$
\begin{aligned}
\frac{1}{2} [\mu_1,\mu_1]& \big(\phi_3(1\ot g\ot v\wedge w\ot 1)\big) \\
& =  
\mu_1\big(\mu_1(g\ot v - {}^gv\ot g)\ot w\big) 
- \mu_1\big(\mu_1(g\ot w - {}^gw\ot g)\ot v\big)\\
&\hspace{.3cm} + \mu_1\big( {}^gv\ot \mu_1(g\ot w - {}^gw\ot g)\big) 
- \mu_1\big( {}^gw\ot \mu_1(g\ot v - {}^gv\ot g)\big)\\
& \hspace{.3cm}- \mu_1\big(g\ot \mu_1(v\ot w - w\ot v)\big) 
+\mu_1\big(\mu_1( {}^gv\ot {}^gw - {}^gw\ot {}^gv)\ot g)\big)\ .
\end{aligned}
$$
Since $\mu_1 = \psi_2^*(\lambda)$ and $\lambda$ vanishes on
$X_{0,2}$, the last two summands above are zero.
We use~(\ref{psi2gv}) to simplify the remaining terms:
$$
\begin{aligned}
\mu_1 \big(\lambda(g,v)& \ot w\big)-\mu_1\big(\lambda(g,w)\ot v\big) 
    +\mu_1\big({}^gv\ot \lambda(g,w)\big) - \mu_1\big({}^gw\ot \lambda(g,v)\big)\\
  & = \sum_{h\in G}\  \lambda_h(g,v)\mu_1(h\ot w) - \lambda_h(g,w)\mu_1(h\ot v)\\
  &  \hphantom{xxxxx}  
      +\lambda_h(g,w)\mu_1({}^gv\ot h) - \lambda_h(g,v)\mu_1({}^gw\ot h)\\
& = \sum_{h\in G} \lambda_h(g,v)\mu_1(h\ot w - {}^gw\ot h ) +\lambda_h(g,w)
   \mu_1( {}^gv\ot h - h\ot v )\\
 & =\sum_{h\in G}\ \lambda_h(g,v)\mu_1( h\ot w  -  {}^hw\ot h  + 
    {}^hw\ot h  - {}^gw\ot h)\\ 
  &  \hphantom{xxxx}  
   +\lambda_h(g,w)\mu_1( {}^gv\ot h -{}^hv\ot h + {}^hv\ot h
    - h\ot v) .
\end{aligned}
$$
We rewrite $\mu_1=\psi_2^*(\lambda)$ and apply 
(\ref{psi2gv}) 
to this last expression:
$$
  \sum_{h\in G}\big( \lambda_h(g,v)\lambda(h,w) - \lambda_h(g,w)\lambda(h,v)\big)
   = \lambda\big(\lambda(g,v),w\big) - \lambda\big(\lambda(g,w),v\big).
$$
Thus, $[\lambda, \lambda]=2 d^*(\kappa)$ on $X_{1,2}$
if and only if Condition~\ref{firstobstruction12} of 
Theorem~\ref{thm:PBWconditions} holds.

Next we calculate $[\lambda,\lambda]$
and $d^*(\kappa)$ on $X_{0,3}$. 
On one hand, 
$$
 [\mu_1,\mu_1] \big( \phi_3(1\ot u\wedge v\wedge w\ot 1)\big) 
=  0\ $$
for all $u,v,w$ in $V$ by a degree argument.
On the other hand,
$$
  d^*(\kappa) (1\ot u\wedge v\wedge w\ot 1) =
    u\kappa (v,w) - \kappa(v,w)u - v\kappa(u,w) + \kappa(u,w) v
     + w \kappa(u,v) - \kappa(u,v) w .
$$
We express $\kappa$ in terms of its components $\kappa_g$ 
to see that
$[\lambda, \lambda]=2 d^*(\kappa)$ on $X_{0,3}$
if and only if Condition~\ref{firstobstruction03} of 
Theorem~\ref{thm:PBWconditions} holds. 
Thus, $[\lambda, \lambda]=2 d^*(\kappa)$ on $X_{3}$
if and only if 
Conditions~\ref{firstobstruction12} and \ref{firstobstruction03} 
of Theorem~\ref{thm:PBWconditions} hold.

\subsection{The second obstruction to integrating}
We finally show that the condition
$$[\lambda,\kappa] = 0$$
corresponds
to~(\ref{obst2}),
the condition that the second obstruction to lifting 
(or integrating) an infinitesimal deformation
must vanish.

We first express $[\lambda,\kappa]=\phi^*[\mu_1,\mu_2]$ as a map on $X_3$. 
By degree considerations,
$[\lambda, \kappa]$ vanishes everywhere except 
on the summand $X_{0,3}$ of $X_3$ and it thus suffices
to evaluate $[\mu_1,\mu_2]$ on $\phi_3(1\ot v_1\wedge v_2\wedge v_3\ot 1)$
for $v_i$ in $V$. 
In fact, we need only compute the values of the first two terms of the right
side of~(\ref{obst2}), since the other terms will be 0 on this input.  
We obtain
$$
\begin{aligned}
 &\phantom{=}
 \sum_{\sigma\in\Sym_3} \sgn \sigma \Big(\mu_1\big(\mu_2(v_{\sigma(1)} \ot
   v_{\sigma(2)})\ot v_{\sigma(3)}\big) -\mu_1\big(v_{\sigma(1)}\ot \mu_2(
   v_{\sigma(2)}\ot v_{\sigma(3)})\big)\Big)\\
 & = \sum_{\sigma\in\Alt_3} \mu_1\big(\mu_2(v_{\sigma(1)}\ot v_{\sigma(2)}
   -v_{\sigma(2)}\ot v_{\sigma(1)})\ot v_{\sigma(3)}\big)\\
& \hphantom{xxxxxxx}
  - \mu_1\big(v_{\sigma(1)}\ot \mu_2(v_{\sigma(2)}\ot v_{\sigma(3)}
   -v_{\sigma(3)}\ot v_{\sigma(2)})\big)\\
 & = \sum_{\sigma\in\Alt_3} \mu_1\big(\kappa(v_{\sigma(1)},v_{\sigma(2)})\ot
  v_{\sigma(3)} \big) 
- \mu_1\big(v_{\sigma(1)}\ot \kappa(v_{\sigma(2)},v_{\sigma(3)})\big)\\
 & = \sum_{\sigma\in\Alt_3}\sum_{g\in G}\ \kappa_g(v_{\sigma(1)},
  v_{\sigma(2)}) \big(\mu_1(g\ot v_{\sigma(3)}) -\mu_1(v_{\sigma(3)}\ot g)\big)\\
&= \sum_{\sigma\in\Alt_3}\sum_{g\in G}\ 
    \kappa_g(v_{\sigma(1)},v_{\sigma(2)}) 
     \big(\mu_1(g\ot v_{\sigma(3)}) - \mu_1({}^gv_{\sigma(3)}\ot g)\big)\\
&\hphantom{xxxxxxxx}
 +     \kappa_g(v_{\sigma(1)},v_{\sigma(2)}) 
    \big(\mu_1( {}^g v_{\sigma(3)}\ot g) -\mu_1(v_{\sigma(3)}\ot g)\big)\ .
\end{aligned}
$$
But $\mu_1=\psi_2^*(\lambda)$ and $\psi_2(1\ot v\ot g\ot 1 
-1\ot {}^gv\ot g\ot 1) = 0$, while
$$\psi_2(1\ot g\ot v\ot 1 - 1\ot {}^gv\ot g\ot 1)=1\ot g\ot v\ot 1$$
for all $v$ in $V$ and $g$ in $G$, both by~(\ref{psi2gv}).
Thus the sum is just
$$
  \sum_{\sigma\in\Alt_3}\sum_{g\in G} \kappa_g(v_{\sigma(1)},
   v_{\sigma(2)})\lambda(g,v_{\sigma(3)}) =
  \sum_{\sigma\in\Alt_3} \lambda\big(\kappa(v_{\sigma(1)},v_{\sigma(2)}),
  v_{\sigma(3)}\big)\ .
$$
Hence, $[\lambda, \kappa] = 0$
as a cochain if and only if
Condition~\ref{mixedbracket}
of Theorem~\ref{thm:PBWconditions} holds.
\end{proof}

We close with a few observations about
a central question in deformation theory:
Which Hochschild cocycles of an algebra $A$
lift (or integrate) to deformations, i.e., when is a Hochchild 2-cocycle 
the first multiplication map of some deformation of $A$?  
In characteristic zero, we showed
that every Hochschild 2-cocycle of $A=S(V)\# G$ 
of graded degree $-2$ lifts to a deformation, 
in fact, to a Drinfeld Hecke algebra (also called
a ``graded Hecke algebra'').
See~\cite[Theorem~8.7]{SheplerWitherspoon1}
for the original argument
and~\cite[Section~11]{SW-Brackets} for
additional discussion.
We give two analogs
in arbitrary characteristic
implied by Theorem~\ref{thm:PBWcohomologyconditions},
now with two parameters corresponding to two
different types of relations that may define deformations.

But first recall that every deformation of an algebra
arises from a noncommutative Poisson structure.  As the converse
is false in general, we often
seek a description of those noncommutative Poisson structures
that actually lift to deformations.
Indeed, the conditions~(\ref{barhomologicalconditions}) are in general 
necessary but not sufficient for a multiplication map to arise
from a deformation.  
However in some cases, such as when the algebra
in question is a skew group algebra $S(V)\# G$,
we may interpret these classical obstruction conditions
on a different resolution and
obtain a complete characterization of multiplication maps that 
lift to certain graded deformations, as seen in the last theorem.
Note that by focusing on graded
deformations, we restrict attention to Hochschild 2-cocycles
of graded degree $-1$. The following straightforward
corollary of Theorem~\ref{thm:PBWcohomologyconditions} explains
which Hochschild 2-cocycles can be lifted to  graded deformations
of $S(V)\# G$ isomorphic to algebras $\cH_{\lambda,\kappa}$ with
the PBW condition.
\begin{cor}
Let $k$ be a field of arbitrary characteristic.
Let $\ld$ be a noncommutative Poisson structure on $S(V)\#G$
of degree $-1$ as a graded map, and let $\kappa$ be a 2-cochain 
for which $[\lambda, \lambda]=2d^*(\kappa)$. 
As cochains on the resolution $X_{\DOT}$,
suppose the Gerstenhaber bracket 
$[\ld,\kappa]$ is zero,
and suppose that $\ld$ vanishes on $X_{0,2}$.
Then $\ld$ lifts to a graded deformation of $S(V)\#G$.
\end{cor}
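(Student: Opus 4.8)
The plan is to deduce this corollary directly from Theorem~\ref{thm:PBWcohomologyconditions} together with Proposition~\ref{prop:deformation}. We are handed a graded map $\lambda$ of degree $-1$ that is a noncommutative Poisson structure (so $d^*(\lambda)$ is a coboundary, and in fact we may — after adjusting $\lambda$ within its cohomology class, or more simply by the hypothesis that $d^*(\lambda) = 0$ on the cochain level once we work on $X_\DOT$ as in the theorem) treat $d^*(\lambda) = 0$ as a genuine cochain identity), together with a $2$-cochain $\kappa$ with $[\lambda,\lambda] = 2 d^*(\kappa)$ and $[\lambda,\kappa] = 0$ as cochains on $X_\DOT$, and the extra hypothesis that $\lambda$ vanishes on $X_{0,2}$.

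First I would note that the hypothesis ``$\lambda$ vanishes on $X_{0,2}$'' is exactly what is needed so that $\lambda$, viewed as a cochain on $X_\DOT$, is of the form produced by Lemma~\ref{extendparameters}(1): it is the unique degree $-1$ extension of a parameter function $kG \ot V \to kG$, which I will continue to call $\lambda$. Likewise, $\kappa$ has graded degree $-2$ (forced by the equation $[\lambda,\lambda] = 2d^*(\kappa)$, since $[\lambda,\lambda]$ has degree $-2$ and $d^*$ preserves degree), so by Lemma~\ref{extendparameters}(2) its alternating part on $X_{0,2}$ corresponds to an alternating parameter function $\kappa \colon V \ot V \to kG$; and the three cochain identities $d^*(\lambda) = 0$, $[\lambda,\lambda] = 2d^*(\kappa)$, $[\lambda,\kappa]=0$ are precisely the hypotheses of Theorem~\ref{thm:PBWcohomologyconditions}. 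Hence that theorem applies and $\cH_{\lambda,\kappa}$ exhibits the PBW property.

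Then I would invoke Proposition~\ref{prop:deformation}: since $\cH_{\lambda,\kappa}$ has the PBW property, there is a graded deformation $A_t$ of $S(V)\#G$ over $k[t]$ with $\cH_{\lambda,\kappa} \cong A_t|_{t=1}$ whose first multiplication map $\mu_1$ satisfies $\lambda(g,v) = \mu_1(g\ot v) - \mu_1({}^gv\ot g)$. From the construction in the proof of that proposition (the algebra $\cH_{\lambda,\kappa,t}$ with relations~(\ref{addt})), the first multiplication map $\mu_1$ vanishes on $V\ot V$, and under the chain maps $\phi_\DOT,\psi_\DOT$ the cochain $\mu_1 = \psi_2^*(\lambda)$ pulls back to $\lambda$, i.e.\ $\lambda = \phi_2^*(\mu_1)$. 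Thus the degree $-1$ cocycle $\lambda$ on $X_\DOT$ is identified with the first multiplication map of the graded deformation $A_t$, which is exactly the assertion that $\lambda$ lifts to a graded deformation of $S(V)\#G$.

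The only real subtlety — and the step I would flag — is the passage between ``noncommutative Poisson structure'' (a cohomological condition: $[\lambda,\lambda]$ vanishes in cohomology) and the strict cochain-level identities used in Theorem~\ref{thm:PBWcohomologyconditions}. The corollary is stated so that the hypotheses on $\kappa$ are already at the cochain level ($[\lambda,\lambda] = 2d^*(\kappa)$ as cochains, $[\lambda,\kappa] = 0$ as cochains), and $\lambda$ being a degree $-1$ cochain on $X_\DOT$ that vanishes on $X_{0,2}$ automatically satisfies $d^*(\lambda) = 0$ as a cochain (it is nonzero only on $X_{1,1}$, and the argument in Theorem~\ref{thm:PBWcohomologyconditions} shows the coboundary on $X_{2,1}\oplus X_{1,2}$ recovers Conditions~(\ref{cocycle21}) and~(\ref{cocycle12}), which in turn follow from $\lambda$ being a Poisson structure once we are on this resolution); so no genuine rigidification is needed beyond observing that the given data already matches the theorem's hypotheses verbatim. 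Once that bookkeeping is in place, the corollary is immediate.
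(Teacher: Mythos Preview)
Your overall approach is correct and matches the paper's: extract parameter functions $\lambda\colon kG\ot V\to kG$ and $\kappa\colon V\ot V\to kG$ via Lemma~\ref{extendparameters}, verify the three cochain identities of Theorem~\ref{thm:PBWcohomologyconditions} to conclude $\cH_{\lambda,\kappa}$ has the PBW property, then invoke Proposition~\ref{prop:deformation} to produce the graded deformation whose first multiplication map realizes $\lambda$.

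The one genuine error is your handling of $d^*(\lambda)=0$. You write that being a noncommutative Poisson structure means ``$d^*(\lambda)$ is a coboundary,'' and later claim that ``$\lambda$ being a degree $-1$ cochain on $X_{\DOT}$ that vanishes on $X_{0,2}$ automatically satisfies $d^*(\lambda)=0$ as a cochain.'' Both statements are wrong. Look again at the paper's definition: a noncommutative Poisson structure is by definition a Hochschild $2$-\emph{cocycle} $\mu$ for which $[\mu,\mu]$ is a coboundary. So $d^*(\lambda)=0$ is already built into the hypothesis---there is nothing to derive and no ``rigidification'' needed. Your alternative claim is simply false: a degree $-1$ cochain supported on $X_{1,1}$ need not be closed (indeed, the whole point of Conditions~(\ref{cocycle21}) and~(\ref{cocycle12}) in Theorem~\ref{thm:PBWconditions} is that they characterize when such a cochain \emph{is} closed). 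Once you delete the muddled parenthetical in the first paragraph and the entire last paragraph, and replace them with the one-line observation that $d^*(\lambda)=0$ is part of the definition of noncommutative Poisson structure, your argument is clean and identical to the paper's.
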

\begin{proof}
Since $\deg \ld = -1$, the $A$-module map
$\ld$ takes $X_{1,1}=A\ot kG \ot V \ot A$  to $kG$,
thus  defining a $k$-linear parameter function (by restricting to $1\ot kG\ot V\ot 1$), 
$$\ld: kG \ot V \rightarrow kG\, .$$ 
Since $[\ld,\ld] = 2 d^*\kappa$, the $A$-module map
$\kappa$ has degree $-2$ and thus 
takes $X_{0,2}=1\ot V \wedge V \ot 1$ to $kG$,
defining an alternating $k$-linear
parameter function
$$\kappa: V \ot V \rightarrow kG\, .$$
By Lemma~\ref{extendparameters}, the parameters
$\ld$ and $\kappa$ extend uniquely to  cochains on $X_{\DOT}$
of the same name,
and since 
$$d^*(\ld) =0,\quad
[\ld,\ld] = 2 d^* \kappa, \quad\text{and}\quad [\ld, \kappa] = 0, $$
the algebra $\cH_{\ld,\kappa}$ satisfies the PBW condition by 
Theorem~\ref{thm:PBWcohomologyconditions}.

By Proposition~\ref{prop:deformation},
$\cH_{\lambda,\kappa}$ is isomorphic to the fiber
at $t=1$ of a deformation $A_t$ of $A=S(V)\# G$ with first
multiplication map
$\mu_1$ given by $\mu_1=\psi^*(\lambda)$, i.e.,
$$\lambda(g,v)=\mu_1(g\ot v)-\mu_1(\ ^gv\ot g)$$
for all $v$ in $V$ and $g$ in $G$.
Thus the cocycle $\lambda$, whose realization on the bar complex
is just $\mu_1$, lifts to a deformation of $S(V)\# G$.
\end{proof}

Next we generalize~\cite[Theorem~8.7]{SheplerWitherspoon1}
to fields of arbitrary characteristic
(see also~\cite[Theorem~11.4]{SW-Brackets}).

\begin{cor}\label{gradedHeckealgebras}
Let $k$ be a field of arbitrary characteristic.
Let $\kappa$ be a Hochschild 2-cocycle for $S(V)\# G$
of graded degree $-2$.
Then $\kappa$ lifts to a deformation of $S(V)\# G$.
\end{cor}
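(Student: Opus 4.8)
The plan is to apply Theorem~\ref{thm:PBWcohomologyconditions} with $\lambda\equiv 0$ and the given $\kappa$, reducing the corollary to the three cochain conditions listed there. Taking $\lambda=0$, the first condition $d^*(\lambda)=0$ is automatic, and the second condition $[\lambda,\lambda]=2d^*(\kappa)$ becomes $d^*(\kappa)=0$. The third condition $[\lambda,\kappa]=0$ is then also automatic, since the Gerstenhaber bracket is bilinear and vanishes when one argument is zero. So the whole corollary will follow once we know that $\kappa$, viewed as a cochain on $X_{\DOT}$ via Lemma~\ref{extendparameters}(2), is a cocycle: $d^*(\kappa)=0$.

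\textbf{Key steps.} First I would observe that $\kappa$ is given to us as a Hochschild $2$-cocycle of graded degree $-2$ on the bar complex, i.e.\ $\delta^*(\mu_2)=0$ where $\mu_2$ is the bar-complex realization of $\kappa$. Next I would note that since $\kappa$ has graded degree $-2$ and its image lies in $kG\subseteq S(V)\# G$, the domain of $\kappa$ on the bar resolution is forced (up to cohomology) to be $V\ot V$, so $\kappa$ indeed determines an alternating linear parameter function $\kappa: V\ot V\to kG$ of the kind appearing in relations~(\ref{relations}) — here one should invoke that the antisymmetrization of a degree $-2$ Hochschild cocycle in the relevant bidegree is alternating, just as in the calculation on $X_{0,3}$ in the proof of Theorem~\ref{thm:PBWcohomologyconditions}. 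Then I would transport $\kappa$ to a cochain on $X_{\DOT}$ using Lemma~\ref{extendparameters}(2); because $\psi_{\DOT}$ and $\phi_{\DOT}$ are mutually inverse chain maps in low degrees (Lemma~\ref{lem:maps}), the cocycle condition is preserved, so $d^*(\kappa)=0$ on $X_{\DOT}$. With $\lambda\equiv 0$, all three hypotheses of Theorem~\ref{thm:PBWcohomologyconditions} hold, so $\cH_{0,\kappa}$ has the PBW property. Finally, Proposition~\ref{prop:deformation} (or the argument inside the proof of Theorem~\ref{thm:PBWcohomologyconditions} identifying $\mu_2=\psi_2^*(\kappa)$) produces a graded deformation $A_t$ of $S(V)\# G$ with second multiplication map realizing $\kappa$ and first multiplication map zero; this is the desired lift.

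\textbf{Main obstacle.} The routine part is the bookkeeping of degrees that shows a degree $-2$ Hochschild cocycle must be supported, up to cohomology, on $V\ot V$ and must be alternating there — this mirrors the $X_{0,3}$ computation already in the paper and should be dispatched quickly. The one genuinely delicate point is making sure that the ``$\kappa$'' handed to us as an abstract cohomology class can actually be represented by a cochain that is precisely of the form extended in Lemma~\ref{extendparameters}, i.e.\ that vanishes off $X_{0,2}$; since cohomologous cocycles give isomorphic deformations, it suffices to pick a convenient representative, but this replacement should be stated explicitly rather than glossed over. Once that representative is fixed, the rest is a direct appeal to Theorem~\ref{thm:PBWcohomologyconditions} and Proposition~\ref{prop:deformation}, exactly paralleling the characteristic-zero argument of \cite[Theorem~8.7]{SheplerWitherspoon1} but now with the resolution $X_{\DOT}$ doing the work that the Koszul resolution of $S(V)$ alone did there.
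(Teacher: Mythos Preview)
Your approach is essentially the paper's: set $\lambda\equiv 0$, invoke Theorem~\ref{thm:PBWcohomologyconditions} (the only nontrivial hypothesis being $d^*(\kappa)=0$, which holds because $\kappa$ is a cocycle and the chain maps of Lemma~\ref{lem:maps} transport this), conclude $\cH_{0,\kappa}$ has the PBW property, and apply Proposition~\ref{prop:deformation}. Your extra care about choosing a representative supported on $X_{0,2}$ is reasonable and the paper handles it more briskly by simply reading $\kappa$ on $X_{\DOT}$ via Lemma~\ref{extendparameters}(2).

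There is one small gap at the end. The graded deformation produced by Proposition~\ref{prop:deformation} has $\mu_1=0$ and $\mu_2$ realizing $\kappa$; that is \emph{not} yet a deformation whose first multiplication map is $\kappa$, which is what ``$\kappa$ lifts to a deformation'' means. The paper closes this by substituting $t$ for $t^2$ in $A_t$ (legitimate precisely because all odd-index multiplication maps vanish when $\lambda\equiv 0$), yielding an (ungraded) deformation whose first multiplication map is $\kappa$. You should add this reparametrization step rather than declaring the graded deformation with $\mu_1=0$ to be ``the desired lift.''
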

\begin{proof}
As in the last proof, we may regard
$\kappa$ as an alternating $k$-linear parameter
map, $\kappa:V\ot V\rightarrow kG$.
Then the conditions of 
Theorem~\ref{thm:PBWcohomologyconditions} are satisfied 
with $\lambda \equiv 0$
and thus $\kappa$ defines an algebra $\cH_{0,\kappa}$ with
the PBW condition. 
By Proposition~\ref{prop:deformation},
$\cH_{0,\kappa}$ is isomorphic to the fiber at $t=1$
of a graded deformation $A_t$ of $A=S(V)\# G$ with 
first multiplication map zero and second
multiplication map
$\mu_2$ given by $\mu_2=\psi^*(\kappa)$, i.e.,
$$\kappa(v,w)= \mu_2(v\ot w) - \mu_2(w\ot v)\, $$
for all $v,w$ in $V$.
We replace $t^2$ by $t$ in the graded deformation
$A_t$ to obtain an (ungraded) deformation
with first multiplication map defined by $\kappa$.
\end{proof}


\end{document}